\definecolor{boubelcolor}{rgb}{.65,0.05,0}
\newtheorem{theo}{Theorem}[section]
\newtheorem{lemme}[theo]{Lemma}
\newtheorem{prop}[theo]{Proposition}
\newtheorem{defi}[theo]{Definition}
\newtheorem{remark}[theo]{Remark}
\newcommand{\h}{\mathrm h}
\newcommand{\He}{\mathbb H}
\newcommand{\R}{\mathbb R}
\newcommand{\F}{\mathcal F}
\newcommand{\W}{\mathcal W}
\newcommand{\eps}{\varepsilon}
\newcommand{\B}{\mathbf B}
\newcommand{\E}{\mathbb E}
\newcommand{\tr}{\mathrm{tr}}
\newcommand{\dil}{\mathrm{dil}}
\renewcommand{\P}{\mathbb P}
\newcommand{\law}{\mathrm{Law}}
\newcommand{\trans}{\mathrm{trans}}
\newcommand{\rot}{\mathrm{rot}}
\title[$L^p$ coupling of two Brownian motions and their L\'evy area]{Couplings in $L^p$ distance of two Brownian motions and their L\'evy area}
\begin{document}
\author{Michel Bonnefont}
\address{Institut de Math\'ematiques de Bordeaux, Universit\'e
 de Bordeaux, $351$, cours de la Lib\'eration
\\$33405$ Talence cedex, France}
\email{michel.bonnefont@math.u-bordeaux.fr}
\author{Nicolas Juillet}
\address{Institut de Recherche Math\'ematique Avanc\'ee, Universit\'e
de Strasbourg,  7 rue Ren\'e-Descartes
 67084 Strasbourg cedex France}
\email{nicolas.juillet@math.unistra.fr}
\subjclass[2010]{60H10, 60J60, 60J65, 53C17, 22E25}
\keywords{Heisenberg group; Co-adapted coupling; Wasserstein distance; Hypoelliptic diffusion}
\date{Version of \today}

\begin{abstract}
We study co-adapted couplings of (canonical hypoelliptic) diffusions on the (subRiemannian) Heisenberg group, that we call (Heisenberg) Brownian motions and are the joint laws of a planar Brownian motion with its L\'evy area. We show that contrary to the situation observed on Riemannian manifolds of non-negative Ricci curvature, for any co-adapted coupling, two Heisenberg Brownian motions starting at two given points can not stay at bounded distance for all time $t\geq 0$. Actually, we prove the stronger result  that they can not stay bounded in $L^p$ for $p\geq 2$.

We also prove two positive results. We first study the coupling by reflection and show that it stays bounded in $L^p$ for $0\leq p <1$. 
Secondly, we construct an explicit static (and in particular non co-adapted) coupling between the laws of two Brownian motions, which provides  $L^1$-Wasserstein control uniformly in time.

Finally, we explain how the results generalise to the Heisenberg groups of higher dimension.
\end{abstract}

\maketitle

\section{Introduction}
\subsection{$L^\infty$ control}
The motivation for this paper is a question, concerning heat diffusion on the Heisenberg group, that is implicitly raised by Kuwada in \cite[Remark 4.4]{Ku}, and that we reproduce at page \pageref{la_question} after Theorem \ref{thm:Li-K}. Before we reach this question let us start with some background and a few definitions. All remaining material will be introduced later in the paper. In the literature, $L^\infty$-Wasserstein control for a diffusion has been used to deduce  $L^1$-gradient estimates of its associated semigroup (see for instance \cite{W} and the references therein). Kuwada extends this result to $L^p$-Wasserstein control and $L^q$-gradient estimates for all $p,q\geq 1$ with $\frac {1}{p} + \frac{1}{q}=1$ and, using Kantorovich  duality, proves that, conversely, $L^q$-gradient estimates allow one to obtain $L^p$-Wasserstein control for the diffusion.

We recall that, on a metric space $(M,d)$, for $p\in (0,\infty]$, the $L^p$-Wasserstein distance between two probability measures $\mu$ and $\nu$ is given by
\begin{equation}\label{eq:Wasserstein}
\W_p(\mu,\nu)=\left( \inf_{\pi \in \Pi(\mu,\nu)}  \iint d (x,y)^p d\pi(x,y) \right)^{1/p}=\inf_{X\sim \mu,\,Y\sim \nu}\|d_\He(X,Y)\|_p. 
\end{equation}
Here $\Pi(\mu,\nu)$ is the set of probability measures on $M \times M$ with marginals $\mu$  and $\nu$. For $p=\infty$ the first expression is replaced by the essential supremum of $d$. Note that $ \W_p$ is a distance only for $p\geq 1$. For  $0< p<1$, it  is only   a quasidistance, in the sense that the triangle inequality only holds up to a multiplicative constant. Using H\"older inequality, it is clear that $\W_p(\mu, \nu)  \leq \W_q(\mu,\nu)$ if $0<p\leq q$.

On the Heisenberg group $\He$, the following $L^1$-gradient bound was established by H.Q. Li \cite{Li} (see also \cite{BBBC}) generalising \cite{DM}
\[
\forall f\in \mathcal{C}_c^{\infty}(\He_1),\forall t\geq0,\forall a\in \He, |\nabla_\h P_t f (a)| \leq C P_t(|\nabla_\h f|)(a), 
\]
where $C>1$ is constant, $P_t$ denotes the heat semigroup associated to half the sub-Laplacian and $\nabla_\h$ the horizontal gradient (see Section \ref{sec:H} for the definitions). Consequently, Kuwada's result implies that the heat diffusion of the Heisenberg group possesses a $L^\infty$-Wasserstein control for its diffusion:

\begin{theo}[H.Q. Li, Kuwada]\label{thm:Li-K}
 There exists $C>0$ such that for every $t\geq 0$  and $a,a'\in \He$
\begin{equation}\label{eq:LiK}
 \mathcal{W}_\infty(\mu^a_t,\mu^{a'}_t)\leq Cd_\He(a,a'),
\end{equation}
where $\mu^a_t=\law(\B^a_t)$ and $\mu^{a'}_t=\law(\B^{a'}_t)$ and $(\B_s^a)_{s\geq 0},  (\B_s^{a'})_{s\geq 0}$ are two Heisenberg Brownian motions, starting respectively in $a,a'$. Moreover,
\begin{equation}\label{eq:LiK2}
 \mathcal{W}_p(\mu^a_t,\mu^{a'}_t)\leq Cd_\He(a,a')
\end{equation}
holds for every $p<\infty$.
\end{theo}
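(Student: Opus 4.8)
The plan is to read \eqref{eq:LiK} as the endpoint case of Kuwada's duality between gradient estimates and Wasserstein contractions, fed with H.Q.~Li's $L^1$-gradient bound, and then to obtain \eqref{eq:LiK2} essentially for free from the elementary monotonicity $\W_p\leq \W_\infty$ already recorded above. So the whole statement splits into one genuine input (applying a cited duality in the right exponent) plus one soft deduction.

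First I would recall Kuwada's result in the form relevant here: for conjugate exponents $p,q\in[1,\infty]$ with $\frac1p+\frac1q=1$, the pointwise gradient estimate
\[
|\nabla_\h P_t f(a)|\leq C\bigl(P_t(|\nabla_\h f|^q)(a)\bigr)^{1/q}\qquad \forall f\in\mathcal{C}_c^\infty(\He),\ \forall a\in\He,
\]
is equivalent to the Wasserstein control $\W_p(\mu^a_t,\mu^{a'}_t)\leq C\,d_\He(a,a')$ for all $a,a'$. Li's inequality is precisely the endpoint $q=1$, whose conjugate is $p=\infty$. Hence invoking the ``gradient estimate $\Rightarrow$ Wasserstein control'' direction of the duality (the one that, as noted above, proceeds through Kantorovich duality) with $q=1$ yields \eqref{eq:LiK} at once, with the same constant $C>1$ as in Li's bound.

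To make the mechanism concrete I would spell out that direction: to control $P_t f(a)-P_t f(a')$ one integrates along a horizontal (sub-Riemannian) geodesic $\gamma$ from $a'$ to $a$, writing $P_t f(a)-P_t f(a')=\int_0^1\langle \nabla_\h P_t f(\gamma_s),\dot\gamma_s\rangle\,ds$, bounds the integrand by the gradient estimate, and then passes to the supremum over admissible $f$ through the dual formulation of $\W_\infty$; the sub-Riemannian length of $\gamma$ produces the factor $d_\He(a,a')$. The point requiring care, and the main obstacle in transporting the argument from a Riemannian manifold to $\He$, is to check that Kuwada's abstract framework indeed applies to the hypoelliptic semigroup $P_t$ generated by half the sub-Laplacian: one needs $\nabla_\h$ to be the horizontal gradient, the curves to be horizontal, and $d_\He$ to be a genuine length distance realised by horizontal geodesics, which is exactly what makes the integration-along-$\gamma$ step legitimate. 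The $p=\infty$ endpoint may moreover call for a short limiting argument from finite $p$.

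Finally, \eqref{eq:LiK2} is immediate: since $\W_p(\mu,\nu)\leq\W_q(\mu,\nu)$ whenever $0<p\leq q$, the bound \eqref{eq:LiK} dominates $\W_p(\mu^a_t,\mu^{a'}_t)$ for every finite $p$ with the same $C$. Thus the substantive work is not in this deduction but upstream, in Li's gradient inequality itself, whose constant being strictly larger than $1$ already reflects the non-Riemannian, non-negatively-curved nature of $\He$; I would simply cite \cite{Li,BBBC,DM} for that deep analytic input.
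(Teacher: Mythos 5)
Your proposal is correct and follows essentially the same route as the paper: the theorem is obtained there exactly by feeding H.Q.~Li's $L^1$-gradient bound into Kuwada's duality (the $q=1$, $p=\infty$ endpoint) to get \eqref{eq:LiK}, and then deducing \eqref{eq:LiK2} from the monotonicity $\W_p\leq \W_q$ for $p\leq q$ already recorded in the introduction. Your additional sketch of the duality mechanism and the remark on the possible limiting argument for the $p=\infty$ endpoint are consistent with the cited references and do not change the substance of the argument.
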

In other words, for each $a,a'\in \He$ and each $t\geq 0$, there exists a coupling $(\B_s^a, \B_s^{a'})_{s\geq 0}$ of  the two Heisenberg Brownian motions  such that 
 \begin{equation} \label{eq:coupling-t}
 d_{\He}(\B_t^a, \B_t^{a'}) \leq C d_{\He} (a,a')\quad\text{almost surely}.
 \end{equation}
Please note: Firstly the time $t\geq 0$ is fixed; secondly $\B_t^a$ and $\B_t^{a'}$ are conveniently defined on the same probability space (and the remaining random variables $(\B^a_s)_{s\neq t}$ and $(\B^{a'}_s)_{s\neq t}$ of the Heisenberg Brownian motions are defined without paying attention to their correlation). Kuwada's problem is precisely on inverting the quantifiers $\forall$ and $\exists$, namely, he asks whether it is possible to define a coupling of the two Heisenberg Brownian motions $(\B_t^a)_{t\geq 0}$ and $(\B_t^{a'})_{t\geq 0}$ such that \eqref{eq:coupling-t} holds for all $t\geq 0$.\label{la_question}

In this paper we answer negatively and show that $\eqref{eq:coupling-t}$ can not hold for all $t\geq 0$ for \emph{co-adapted} couplings (see Definition \ref{def:co-adapted}), probably the most usual couplings in the literature for our type of problem, (see, e.g. \cite{BaCK, K,Ke_ecp,K2,Cra91,Re_ejp,KuS,PaPo}).
Informally, a coupling  of two processes $(\B_t)$ and $(\B_t')$ is said  co-adapted if the interaction in the coupling only depends on the common past of the process $(\B_t, \B'_t)$. See Definition \ref{def:co-adapted} fo the rigorous definition.

 Our results hold for the Heisenberg groups of higher dimension, as explained in Section \ref{sec:gene}, but we only prove them thoroughly in the first Heisenberg group where all the significative ideas are present and the notation is lighter.

\begin{theo} \label{thm:rapport-Winfty}
For every $T>0$ and every $C>0$ there exists two points $a,a'\in \He$ with $a\neq a'$ such that for every co-adapted coupling $(\B_t^a, \B_t^{a'})_{0\leq t\leq T}$, there exists $t\leq T$ such that
\[ \mathrm{essup}_{(\Omega,\P)} \, d_{\He}(\B_t^a, \B_t^{a'})> Cd_\He(a,a').\]
\end{theo}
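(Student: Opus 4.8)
The plan is to use the representation of a Heisenberg Brownian motion $\B^a_t=(X_t,Y_t,Z_t)$ as a planar Brownian motion $(X_t,Y_t)$ together with its Lévy area $Z_t=\tfrac12\int_0^t(X_s\,dY_s-Y_s\,dX_s)$, and to exploit that the sub-Riemannian distance satisfies $d_\He(0,(x,y,z))\asymp\|(x,y)\|+\sqrt{|z|}$. By left-invariance, for a coupling $(\B^a_t,\B^{a'}_t)$ the quantity $d_\He(\B^a_t,\B^{a'}_t)$ is comparable to $|D_t|+\sqrt{|\zeta_t|}$, where $D_t=(U_t,V_t)=(X'_t-X_t,Y'_t-Y_t)$ is the horizontal difference and $\zeta_t=Z'_t-Z_t+\tfrac12(Y_tX'_t-X_tY'_t)$ is the relative area read off from $(\B^a_t)^{-1}\cdot\B^{a'}_t$. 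I would take $a=0$ and $a'=(x_0,0,0)$, so that $d_\He(a,a')\asymp x_0$, and show that for $x_0$ small enough relative to $\sqrt T$ (the scaling $\delta_\lambda(x,y,z)=(\lambda x,\lambda y,\lambda^2z)$ makes this the natural regime), no co-adapted coupling can keep $\mathrm{essup}\,d_\He(\B^a_t,\B^{a'}_t)\le Cx_0$ throughout $[0,T]$. Arguing by contradiction, such a bound forces $\mathrm{essup}\,|D_t|\le C'x_0$ and $\mathrm{essup}\,|\zeta_t|\le C'^2x_0^2$ on $[0,T]$, where $C'$ is a fixed multiple of $C$.

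I would then invoke the standard description of co-adapted couplings of Brownian motions: one may write $dB'_t=M_t\,dB_t+N_t\,d\tilde B_t$, with $B=(X,Y)$, an independent planar Brownian motion $\tilde B$, and previsible $2\times2$ matrices subject to $M_tM_t^\ast+N_tN_t^\ast=I$. Itô's formula then yields the two SDEs that drive the argument:
\[
dD_t=(M_t-I)\,dB_t+N_t\,d\tilde B_t,\qquad d\zeta_t=\tfrac12\bigl(U_t\,dS^Y_t-V_t\,dS^X_t\bigr)+\tfrac12\,(M_{12}-M_{21})_t\,dt,
\]
where $S^X_t=X_t+X'_t$ and $S^Y_t=Y_t+Y'_t$. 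Two facts are central. First, $D$ is a continuous martingale (a difference of Brownian motions), so $\E|D_t|^2=x_0^2+\E\langle D\rangle_t\ge x_0^2$ for every $t$: the horizontal marginals can never couple in $L^2$. Second, under the contradiction hypothesis $\E\langle D\rangle_T=\E|D_T|^2-x_0^2\le C'^2x_0^2$, and since $\langle D\rangle_T=\int_0^T\tr(2I-M_s-M_s^\ast)\,ds$ with $M_s$ a contraction, this pins $M_s$ near the identity for most times; quantitatively I would use the elementary bound $\|M-I\|_F^2\le2(2-\tr M)$ valid for $2\times2$ contractions.

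The heart of the proof is a lower bound on the quadratic variation of $\zeta$. Its martingale part gives $d\langle\zeta\rangle_t=\tfrac14\,(U_t,V_t)\,\tilde G_t\,(U_t,V_t)^{\!\top}\,dt$, where $\tilde G_t$ is the Gram matrix of the pair $(dS^Y_t,-dS^X_t)$; at $M=I,N=0$ one has $\tilde G=4I$, and by the contraction bound $\tilde G_s\succeq2I$ on the good set $A_s=\{\|M_s-I\|_F\le\eta\}$ for a fixed small $\eta$. Splitting on $A_s$, using $\E|D_s|^2\ge x_0^2$ together with the Chebyshev estimate $\E\int_0^T\mathbf 1_{A_s^c}\,ds\le\eta^{-2}\,\E\int_0^T\|M_s-I\|_F^2\,ds\le\eta^{-2}\E\langle D\rangle_T\lesssim x_0^2$, I would obtain $\E\langle\zeta\rangle_T\ge\tfrac14x_0^2T$ once $x_0^2\ll T$. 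Finally $\E\zeta_T^2=\E\langle\zeta\rangle_T+\E\int_0^T\zeta_s\,(M_{12}-M_{21})_s\,ds$, and the drift term is controlled by $C'^2x_0^2\,\E\int_0^T|M_{12}-M_{21}|\,ds\lesssim C'^3x_0^3\sqrt T$, again via $\|M-I\|_F^2\le2(2-\tr M)$ and Cauchy--Schwarz; for $x_0\ll\sqrt T$ it is negligible against $x_0^2T$. Hence $\E\zeta_T^2\ge\tfrac18x_0^2T$, which contradicts $\E\zeta_T^2\le C'^4x_0^4$ as soon as $x_0^2<T/(8C'^4)$, since $\mathrm{essup}\,d_\He\ge(\E\zeta_T^2)^{1/2}$ up to a constant. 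Choosing $x_0$ that small for the given $T,C$ completes the argument.

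The main obstacle is precisely this last stage. The quadratic form $\tilde G$ can degenerate exactly when the coupling is far from synchronous, so one cannot bound $\langle\zeta\rangle$ from below pointwise; the resolution is to combine the martingale lower bound $\E|D_t|^2\ge x_0^2$ (the horizontal parts cannot collapse in mean square) with the contraction estimate that confines $M$ to a neighbourhood of $I$ on all but a negligible time-set, and, simultaneously, to show that the antisymmetric drift $\tfrac12(M_{12}-M_{21})$ of $\zeta$—which is the only mechanism that could cancel the accumulated area—is itself tied to $2-\tr M$ and hence too small to compete with the growth $x_0^2T$.
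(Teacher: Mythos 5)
Your argument is correct, and at its core it runs on the same engine as the paper's proof: the horizontal difference $D_t$ is a martingale, so $\E|D_t|^2\geq x_0^2$ for all $t$ (the paper phrases this as $(R_t^2)$ being a submartingale), which forces the quadratic variation of the relative area to accumulate at rate of order $x_0^2$ per unit time; meanwhile the antisymmetric drift $\tfrac12(M_{12}-M_{21})$ is tied, via the contraction constraint and Cauchy--Schwarz, to the budget $\E\int_0^T\tr(2I-M_s-M_s^\ast)\,ds\leq C'^2x_0^2$, hence contributes at most order $x_0^3\sqrt T$; boundedness of the area then yields the contradiction. Where you genuinely depart from the paper is in the packaging. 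The paper first proves an infinite-horizon statement (Theorem \ref{thm:Winfty}: no co-adapted coupling stays a.s.\ bounded for all $t\geq0$) and deduces Theorem \ref{thm:rapport-Winfty} through a separate dilation argument showing the constant $C_{T,\infty}$ is independent of $T$; you instead fold the scaling into the choice $x_0\ll\sqrt T$ and run a quantitative finite-horizon version of the contradiction, which makes the proof self-contained. Second, the paper passes to the moving frame attached to $B'_t-B_t$ (matrices $K$), where $d\langle Z\rangle_t=\tfrac{R_t^2}{2}(1+K^{2,2})\,dt$, and neutralizes the possible degeneracy $K^{2,2}\approx-1$ by adding the complementary term $\E\int\tfrac{R_s^2}{2}(1-K^{2,2})\,ds\leq C^4/4$, which is bounded thanks to the a.s.\ bound on $R$; you stay in the fixed frame and handle the same degeneracy with the good set $A_s=\{\|M_s-I\|_F\leq\eta\}$ plus a Chebyshev bound on the time spent on its complement. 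Both devices consume exactly the same ingredient (the a.s.\ bound on the horizontal distance), so this is a matter of bookkeeping, but yours avoids the frame change of Lemma \ref{lem:SDE} entirely.

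Two small points to tighten. First, your Gram bound $\tilde G_s\succeq2I$ on $A_s$ is true but not purely a statement about $M_s$: the off-diagonal entries of $\tilde G_s$ involve $N_s$, so you must also invoke $N_sN_s^\ast=I-M_sM_s^\ast$ together with $\|M_s\|_F\geq\sqrt2-\eta$ to get $\|N_s\|_F^2\leq2\sqrt2\,\eta$; make that explicit. Second, the last clause of your proof has a wrong exponent: $\mathrm{essup}\,d_\He\gtrsim(\mathrm{essup}|\zeta_T|)^{1/2}\geq(\E\zeta_T^2)^{1/4}$, not $(\E\zeta_T^2)^{1/2}$. This is harmless, since the contradiction you actually exploit is the one you set up at the start, namely $\E\zeta_T^2\leq C'^4x_0^4$ (from the a.s.\ bound $|\zeta_t|\leq C'^2x_0^2$) against $\E\zeta_T^2\geq\tfrac18x_0^2T$.
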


\begin{remark} \label{rem:rapport-Winfty}
Another way to state Theorem \ref{thm:rapport-Winfty} is as follows: Let $T>0$, then 
\begin{align*}
 \sup_{a\neq a'\in\He} \inf_{   \mathcal A_ T ^{(a,a')} } \sup_{0\leq t\leq T}    \frac{ \mathrm{essup}_{(\Omega,\P)}   d_{\He}(\B_t^a, \B_t^{a'}) }{ d_{\He} (a,a')}=+\infty
\end{align*}
where $ \mathcal A_ T ^{(a,a')}$  denotes the set of all co-adapted couplings $(\B_t^a, \B_t^{a'})_{0\leq t\leq T}$ of two Heisenberg Brownian motions starting respectively in $a$ and $a'$.
 \end{remark}

The proof will be based on the following result and the use of homogeneous dilations (defined in Section \ref{sec:H}):

\begin{theo}\label{thm:Winfty}\label{them:LiKu}
 Let  $(\B_t)_t$ and  $(\B'_t)_t$  be any two co-adapted Heisenberg Brownian motions starting respectively in $a=(x,y,z)$ and $a'=(x',y',z')$ with $(x'-x)^2 + (y'-y)^2>0$. Then, for every $C>0$,
 \[
  \P\left(\forall t \geq 0,\,d_\He ( \B_t,\B'_t) \leq C\right)\neq1.  
 \]
\end{theo}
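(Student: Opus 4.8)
The plan is to argue by contradiction and to show that keeping the horizontal parts of the two Brownian motions close forces the difference of their Lévy areas to diverge, which is incompatible with a bounded distance. Throughout I write $\B_t=(X_t,Y_t,Z_t)$, $\B'_t=(X'_t,Y'_t,Z'_t)$ and work with the left-translated process $\B_t^{-1}\cdot\B'_t=(u_t,v_t,V_t)$, whose coordinates are $u_t=X'_t-X_t$, $v_t=Y'_t-Y_t$, and the relative area $V_t=(Z'_t-Z_t)+\tfrac12(Y_tX'_t-X_tY'_t)$. Since a homogeneous norm is comparable to $d_\He$ (Section~\ref{sec:H}), the event $\{\forall t,\ d_\He(\B_t,\B'_t)\le C\}$ is contained in $\{\forall t,\ |D_t|\le C',\ |V_t|\le C''\}$, where $D_t=(u_t,v_t)$ and $C'=\kappa C$, $C''=\kappa C^2$ for some dimensional $\kappa$. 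Assume for contradiction that $\P(\forall t,\ d_\He(\B_t,\B'_t)\le C)=1$; then a.s. $|D_t|\le C'$ and $|V_t|\le C''$ for every $t$, so $\E|D_t|^2\le(C')^2$ and $\E V_t^2\le(C'')^2$ for all $t$.

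I next record the structure of a co-adapted coupling. Both $(X,Y)$ and $(X',Y')$ being Brownian motions for a common filtration (Definition~\ref{def:co-adapted}), the infinitesimal covariance of $(X,Y,X',Y')$ is a.e.\ positive semidefinite with the two diagonal $2\times2$ blocks equal to the identity; writing $\rho(t)$ for the off-diagonal block, $d\langle (X,Y)_i,(X',Y')_j\rangle_t=\rho_{ij}(t)\,dt$, this forces $\|\rho(t)\|_{\mathrm{op}}\le1$ a.e., hence $\rho_{ij}^2\le 1-\rho_{kk}^2\le 2(1-\rho_{kk})$ for the relevant indices. An Itô computation then gives $V_t=V_0+M_t+N_t$, where $dM_t=\tfrac12 u_t\,d(Y_t+Y'_t)-\tfrac12 v_t\,d(X_t+X'_t)$ is a local martingale and $N_t=\tfrac12\int_0^t(\rho_{21}-\rho_{12})\,ds$ has finite variation. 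Because $|D_t|\le C'$ one checks $d\langle M\rangle_t\le K(C')^2\,dt$, so $M$ is a genuine $L^2$-martingale and the stochastic integrals below have zero mean.

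The contradiction comes from comparing an upper and a lower bound for $\E\langle M\rangle_t$. For the upper bound, Itô's formula gives the energy identity $\E V_t^2=V_0^2+\E\int_0^t V_s(\rho_{21}-\rho_{12})\,ds+\E\langle M\rangle_t$; since $\E V_t^2\le(C'')^2$ and, by $\|\rho\|_{\mathrm{op}}\le1$ with Cauchy--Schwarz, $\E\int_0^t|\rho_{21}-\rho_{12}|\,ds\le c\sqrt t\,\big(\E\int_0^t (1-\rho_{11})+(1-\rho_{22})\,ds\big)^{1/2}=O(\sqrt t)$ (the inner integral equals $\tfrac12\E[\langle u\rangle_t+\langle v\rangle_t]\le\tfrac12(C')^2$, as positions stay bounded), I obtain $\E\langle M\rangle_t\le (C'')^2+O(\sqrt t)$. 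For the lower bound, I expand $\tfrac{d}{dt}\langle M\rangle_t=\tfrac12|D_t|^2+\tfrac12\big(\rho_{22}u_t^2+\rho_{11}v_t^2-u_tv_t(\rho_{12}+\rho_{21})\big)$ and, writing $\eps_t=\max(1-\rho_{11},1-\rho_{22})$ and using $\rho_{ij}^2\le2(1-\rho_{kk})$, bound the de-synchronization to get $d\langle M\rangle_t\ge |D_t|^2\big(1-g(\eps_t)\big)\,dt$ with $g(\eps)\le c'\sqrt\eps$. Integrating and using the martingale lower bound $\E|D_s|^2\ge|\E D_s|^2=|D_0|^2>0$ together with $\E\int_0^t g(\eps_s)\,ds=O(\sqrt t)$ yields $\E\langle M\rangle_t\ge|D_0|^2\,t-O(\sqrt t)$. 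The two bounds give $|D_0|^2\,t\le (C'')^2+O(\sqrt t)$, which fails for large $t$ since $|D_0|^2=(x'-x)^2+(y'-y)^2>0$, and this contradiction proves $\P(\forall t,\ d_\He(\B_t,\B'_t)\le C)\ne1$.

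I expect the main obstacle to be the lower bound on $\E\langle M\rangle_t$: the quadratic form governing $d\langle M\rangle_t$ is \emph{not} pointwise comparable to $|D_t|^2$ for an arbitrary admissible $\rho$ (e.g.\ a swap coupling makes it degenerate), so one cannot simply drop the cross term. The point is that boundedness of the positions makes the couplings asymptotically synchronous in the integrated sense $\int_0^t(1-\rho_{ii})\,ds\le\tfrac12(C')^2$, whence the off-diagonal entries of $\rho$, controlled by $\sqrt{1-\rho_{ii}}$, contribute only at order $\sqrt t$ both to the drift work in the upper bound and to the defect in the lower bound. Keeping these error terms genuinely $o(t)$ is exactly what lets the linear growth $|D_0|^2 t$ of the area's energy win over the at most $\sqrt t$ that any co-adapted drift can supply.
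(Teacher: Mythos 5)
Your proof is correct and is essentially the paper's own argument for Theorem \ref{thm:Winfty}: both derive a contradiction by comparing the linear growth $\E\langle M\rangle_t\geq R_0^2\,t-o(t)$ of the quadratic variation of the martingale part of the relative L\'evy area (forced by the martingale property of the horizontal difference together with the integrated synchronisation bound $\E\int_0^{\infty}[(1-\rho_{11})+(1-\rho_{22})]\,ds\leq\tfrac12 (C')^2$, the analogue of \eqref{eq:Efini0}) against the bound $\E\langle M\rangle_t\leq O(1)+O(\sqrt t)$ coming from the energy identity for the squared relative area and the Cauchy--Schwarz estimate on the drift, exactly as in \eqref{eq:Z2}, \eqref{un} and \eqref{deux}. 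The only divergence is bookkeeping: the paper passes to the moving frame adapted to $B_t-B'_t$ (Lemmas \ref{lem:JK} and \ref{lem:SDE}), in which $d\langle M\rangle_t=\tfrac{R_t^2}{2}(1+K^{2,2})\,dt$ and the desynchronisation error \eqref{eq:Efini2} is bounded by a constant, whereas you stay in fixed coordinates and absorb the cross terms of the quadratic form via $|\rho_{ij}|\leq\sqrt{2(1-\rho_{kk})}$ at the cost of an extra $O(\sqrt t)$ — both errors are $o(t)$, so both versions of the argument close in the same way.
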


\subsection{Comparison with the Riemannian case}
These results show a significative difference with the Riemannian case. Indeed, on a Riemannian manifold $M$,  it is well known (see e.g. \cite {W} and \cite{VRS}) that if the Ricci curvature is bounded from below by $k\in\R$, there exists a Markovian coupling of two Brownian motions such that almost surely 
 \begin{equation} \label{eq:coupling-t-riem}
 d(\B_t^a, \B_t^{a'}) \leq e^{-(k/2)t} d (a,a')\quad \textrm{ for all } t\geq 0,\,a\in M,\,a'\in M.
 \end{equation}
Here we call Brownian motions the diffusion processes starting at $a$ and $a'$ respectively, having generator half the Laplace--Beltrami operator. We make clear that Markovian coupling is a type of co-adapted coupling. Note moreover that the motivation for proving \eqref{eq:coupling-t-riem} is exactly to provide estimates on the heat semi-group (see, e.g. \cite{Cra91,Cra92}), so that the historical $L^p$-Wasserstein controls have been established for co-adaptive processes whereas $L^p$-Wasserstein controls at fixed time may first appear unusual from a stochastic perspective.

We note further that
\begin{itemize}
\item the Heisenberg group can be thought as the first sub-elliptic model space of curvature $0$ (e.g. \cite{Mg}) but, its behaviour with respect to couplings of co-adapted Brownian motions is therefore completely different from the case of Riemannian manifolds with curvature bounded from below by $k=0$.
\item the Heisenberg group is also classically presented as the limit space for a sequence of Riemannian metrics on the Lie group, the optimal lower bound on the Ricci curvature of which tends to $-\infty$. On this topic see \cite{Ju_imrn,BG}. This fact is coherent with the interpretation of Theorem \ref{thm:Winfty} as a special case of \eqref{eq:coupling-t-riem} where the best bound for the $L^\infty$ control is $C=e^{-kt}$ with $k=-\infty$: There is no possible control for $t>0$.
\end{itemize}

\subsection{$L^p$ control for $0<p<\infty$}
 To go further,  given  two  diffusion processes  $(\B_t)_{t\geq 0}$ and $(\B'_t)_{t\geq 0}$ on a metric space $(M,d)$, we shall consider the function
\[
t\in [0,\infty) \to \E\left[d(\B_t, \B'_t)^p \right]^\frac{1}{p}\in [0,\infty].
\]
and try to bound it from above uniformly in time for some well-chosen co-adapted coupling.
If we  denote by $\mu_t$ and $\nu_t$ the law of the processes  $(\B_t)_{t\geq 0}$ and $(\B'_t)_{t\geq 0}$, we clearly have for each $t\geq 0$:
\[
 \E\left[d(\B_t, \B'_t)^p \right]^\frac{1}{p} \geq  \W_p (\mu_t,\nu_t).
\]

On the Heisenberg group, we will prove the result stronger than Theorem \ref{thm:rapport-Winfty} that any co-adapted coupling $(\B_t)_t$ and  $(\B'_t)_t$ of Brownian motions do not stay bounded in $L^{2}$:

\begin{theo} \label{thm:rapport-W2}
Let $p\geq 2$.  For every $T>0$ and every $C>0$ there exists two points $a,a'\in \He$ with $a\neq a'$ such that for every co-adapted coupling $(\B_t^a, \B_t^{a'})_{0\leq t\leq T}$, there exists $t\leq T$ such that
\[ 
\E \left[  d_{\He}^p(\B_t^a, \B_t^{a'})\right]> Cd_\He(a,a')^p.\]
\end{theo}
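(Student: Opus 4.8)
The plan is to reduce to the exponent $p=2$ and then to an obstruction carried by the vertical (Lévy area) coordinate. For the first reduction, note that for $p\ge 2$ the map $s\mapsto s^{p/2}$ is convex, so Jensen's inequality gives, for every coupling and every $t$,
\[
\E\!\left[d_\He^p(\B_t,\B_t')\right]\ \ge\ \left(\E\!\left[d_\He^2(\B_t,\B_t')\right]\right)^{p/2}.
\]
Hence it suffices to prove the statement for $p=2$ \emph{with an arbitrary constant}: given $T$ and $C$, apply the $p=2$ case with constant $C^{2/p}$ to produce $a\neq a'$; for any co-adapted coupling the guaranteed time $t\le T$ with $\E[d_\He^2]>C^{2/p}d_\He(a,a')^2$ then also satisfies $\E[d_\He^p]>C\,d_\He(a,a')^p$. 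So from now on I would work with $p=2$.

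Next I would exploit the homogeneous dilations $\delta_r$ and the self-similarity of the Heisenberg Brownian motion: if $(\B_t)$ starts at $a$ then $(\delta_{1/r}\B_{r^2t})$ is a Heisenberg Brownian motion starting at $\delta_{1/r}a$, the dilations are group automorphisms acting on a co-adapted coupling by a deterministic space–time change (so co-adaptedness is preserved), and $d_\He(\delta_r a,\delta_r a')=r\,d_\He(a,a')$. Arguing by contradiction, suppose there are $T,C$ such that for every $a\neq a'$ some co-adapted coupling keeps $\E[d_\He^2(\B_t,\B_t')]\le C\,d_\He(a,a')^2$ on $[0,T]$. Applying this to the dilated pair $\delta_{1/n}a,\delta_{1/n}a'$ and rescaling back by $\delta_n$, the factor $n^2$ coming from $d_\He^2$ cancels the time scaling, so for a \emph{fixed} pair $a\neq a'$ with $(x-x')^2+(y-y')^2>0$ I obtain co-adapted couplings on $[0,n^2T]$ with $\sup_{t\le n^2T}\E[d_\He^2(\B_t,\B_t')]\le C\,d_\He(a,a')^2$. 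Letting $n\to\infty$ and extracting, by tightness of co-adapted couplings together with lower semicontinuity of $t\mapsto\E[d_\He^2]$ under the relevant convergence, a single co-adapted coupling defined for all $t\ge 0$, I would reach a coupling with $\sup_{t\ge0}\E[d_\He^2(\B_t,\B_t')]<\infty$.

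It then remains to prove the core statement: \emph{every} co-adapted coupling started at $a,a'$ with distinct planar projections satisfies $\sup_{t\ge0}\E[d_\He^2(\B_t,\B_t')]=+\infty$. Here I would pass to the vertical coordinate $\zeta_t$ of $\B_t^{-1}\B_t'$ and use the comparison for the homogeneous norm, $d_\He(\B_t,\B_t')^2\gtrsim |u_t|^2+|v_t|^2+|\zeta_t|$, where $(u_t,v_t)$ is the planar difference; a finite $\sup_t\E[d_\He^2]$ would thus force both $\sup_t\E[\,u_t^2+v_t^2\,]<\infty$ and $\sup_t\E|\zeta_t|<\infty$. Representing the coupling through its adapted coupling matrix $A_t$ (Kurtz, $\|A_t\|\le 1$), an It\^o computation shows that $\zeta_t$ has martingale part proportional to the planar difference — schematically driven by $u_t\,d(Y_t+Y_t')-v_t\,d(X_t+X_t')$ — and a finite-variation part governed by the off-diagonal entries of $A_t$; in particular the only way to freeze $\zeta$ is to synchronise the planar motions, that is to drive $(u_t,v_t)$ to $0$. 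The whole point is to show these two requirements are incompatible.

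The main obstacle is precisely this uniform-in-strategy lower bound $\sup_t\E|\zeta_t|=\infty$, because the coupling may trade planar synchronisation against vertical spreading and may tune the off-diagonal entries of $A_t$ so as to keep the \emph{mean} $\E[\zeta_t]$ constant; only a spreading (variance/heavy-tail) statement can win. I expect two regimes. If the planar parts do not coalesce, then $\E[\,u_t^2+v_t^2\,]$ stays bounded below, so the quadratic variation of the $\zeta$-martingale accumulates at a positive rate and one must rule out an exact compensation by the drift (the delicate case, since both the martingale and the finite-variation parts may blow up in $L^2$, and only a finer estimate — presumably through the characteristic function $\E[e^{i\lambda\zeta_t}]$ and a Lévy-type area identity in the spirit of $\E[e^{i\lambda A_t}]=1/\cosh(\lambda t/2)$ — precludes cancellation). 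If instead the planar parts coalesce at a time $\tau$, then $\zeta_t$ freezes at $\zeta_\tau$, and the extremal, fastest-coalescing strategy is the planar reflection coupling, for which the coalescence time has infinite mean and the frozen area difference already satisfies $\E|\zeta_\tau|=\infty$; then $\zeta_t\to\zeta_\tau$ in law and Fatou forces $\liminf_t\E|\zeta_t|\ge\E|\zeta_\tau|=\infty$. Capturing both regimes by a single estimate valid for \emph{all} co-adapted strategies is the crux of the argument.
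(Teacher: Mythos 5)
Your two reductions are sound and match the paper in spirit: Jensen's inequality does reduce $p\ge 2$ to $p=2$, and the dilation step is exactly how the paper removes the dependence on the horizon $T$. But your passage from finite horizons $[0,n^2T]$ to a single co-adapted coupling on $[0,\infty)$ by ``tightness plus lower semicontinuity'' is asserted rather than proved, and it is not innocent: co-adaptedness would have to be recast as a property of the joint law that is closed under weak convergence, and the relative L\'evy area is not a continuous functional of the planar paths. The paper avoids this limit entirely by keeping the horizons finite and making the contradiction quantitative: with constants $C$ and $h$ independent of $T_0$, it derives the inequality \eqref{eq:contradiction}, namely $8C^2n\le\sqrt{C^2hn/2}$ whenever $nh\le T_0$, and then lets $T_0$ and $n$ grow together. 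No limiting coupling is ever constructed.

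The fatal gap is that you never prove the core statement (the analogue of Theorem \ref{thm:W2}), and you say so yourself: ``capturing both regimes by a single estimate valid for all co-adapted strategies is the crux.'' Neither of your heuristics closes it. The claim that the planar reflection coupling is \emph{extremal} among coalescing strategies is unjustified --- nothing forces a general adaptive strategy to be comparable to it --- and the characteristic-function idea for the non-coalescing regime is not carried out. The paper's mechanism is different and fully quantitative. Write $Z_t=M_t-A_t$ (martingale plus bounded-variation part). First, Cauchy--Schwarz together with the bound $\E\int_0^\infty[(1-J^{1,1})+(1-J^{2,2})]\,ds\le C^2/2$ (which follows from $\sup_t\E[R_t^2]\le C^2$, since $R_t^2$ is a submartingale with exactly this drift) shows the expected total variation of $A$ on $[0,t]$ grows at most like $\sqrt{t}$, see \eqref{totvar}. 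Second, the same integrability bound excludes your ``coalescing'' regime outright: by Dambins--Dubins--Schwarz, if $R$ hit the level $R_0/2$ almost surely, then $\int_0^T 2(1-K^{1,1})\,ds$ would stochastically dominate a Brownian hitting time, whose mean is infinite --- a contradiction; hence $q:=\P(\forall s,\ R_s\ge R_0/2)>0$. Third, on that event the quadratic variation of $M$ accumulates linearly, so Lemma \ref{lem:MG} gives $\E|M_{t+h}-M_t|\ge 10C^2$ for a fixed large $h$ and \emph{every} $t$; the triangle inequality with $\E|Z_t|\le C^2$ then forces $\E|A_{t+h}-A_t|\ge 8C^2$ for every $t$, i.e.\ linear growth of the total variation of $A$, contradicting the $\sqrt{t}$ bound. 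This tension between linear and square-root growth is the single estimate valid for all strategies that your sketch is missing; the exact drift cancellation you worried about is ruled out by the growth-rate mismatch, not by any Fourier identity.
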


\begin{remark} \label{rem:rapport-W2}
Equivalently Theorem \ref{thm:rapport-W2} can be stated as follows: Let $p\geq 2$.   Let $T>0$, then 
\begin{align*}
 \sup_{a\neq a'\in\He}   \inf_{   \mathcal A_ T ^{(a,a')} }    \sup_{0\leq t\leq T}  \frac{ \E[ d_{\He}^p(\B_t^a, \B_t^{a'}) ]^\frac{1}{p}
} { d_{\He} (a,a')}=+\infty
\end{align*}
where $ \mathcal A_ T ^{(a,a')}$ denotes the set of all co-adapted coupling $(\B_t^a, \B_t^{a'})_{0\leq t\leq T}$ of two Heisenberg Brownian motions starting respectively in $a$ and $a'$.
\end{remark}

As for $p=\infty$, the proof will be based on the following result and the use of dilations.
\begin{theo}\label{thm:W2}
 Let  $(\B_t)_{t\geq 0}$ and  $(\B'_t)_{t\geq 0}$  be any two co-adapted Heisenberg Brownian motion starting respectively in $a=(x,y,z)$ and $a'=(x',y',z')$ such that $(x'-x)^2 + (y'-y)^2>0$. Then, 
 \[
   \limsup_{t\to +\infty} \E\left[d_\He ( \B_t,\B'_t)^2 \right] \to +\infty.  
 \]
\end{theo}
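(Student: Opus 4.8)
The plan is to set up explicit coordinates for a co-adapted coupling of two Heisenberg Brownian motions and extract the dynamics of the difference of their Lévy-area components. Recall that a Heisenberg Brownian motion $\B_t=(X_t,Y_t,Z_t)$ has planar part $(X_t,Y_t)$ a standard planar Brownian motion and third coordinate $Z_t$ the associated Lévy area, i.e. $dZ_t=\tfrac12(X_t\,dY_t-Y_t\,dX_t)$. I would first write the two coupled processes driven by a common filtration: by the co-adapted structure, the planar increments of $\B'$ can be expressed as $d\binom{X'}{Y'}=O_t\,d\binom{X}{Y}+\sqrt{I-O_t^{\mathsf T}O_t}\,dW_t$ where $O_t$ is an adapted $2\times2$ matrix with $\|O_t\|\le1$ and $W$ is an independent planar Brownian motion, this being the general form of a co-adapted coupling of two planar Brownian motions. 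Writing $U_t=(X_t-X_t',\,Y_t-Y_t')$ for the horizontal difference and $V_t=Z_t-Z_t'$ for the vertical (area) difference, I would compute $dV_t$ via Itô's formula.

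The key step is to understand the growth of $\E[V_t^2]$. Since $d_\He$ is comparable to $|U_t|+\sqrt{|V_t|}$ (the homogeneous gauge), controlling $\E[d_\He(\B_t,\B'_t)^2]$ from below amounts to controlling $\E[|V_t|]$ from below (the horizontal part $U_t$ is a time-changed/co-adapted difference of planar Brownian motions and cannot by itself force blow-up, so the area term must). The heart of the matter is that $V_t$ has a \emph{drift} coming from the cross terms $X\,dY'-Y\,dX'$ that does not average out. Concretely, after expanding $dZ_t-dZ_t'$ one finds a stochastic-integral part plus a finite-variation part whose instantaneous rate is governed by the off-diagonal/antisymmetric part of $O_t$ together with the horizontal gap; the nonnegativity of the relevant quadratic-variation contributions should force $\frac{d}{dt}\E[V_t^2]$ to stay bounded below by a positive constant that depends only on the initial horizontal separation $(x'-x)^2+(y'-y)^2>0$. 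I would make this precise by deriving a differential inequality of the form $\frac{d}{dt}\E[V_t^2]\ge c>0$ (or at least $\liminf$ of this derivative positive), using that the planar difference process $U_t$ returns near the initial gap with positive frequency so that the area-production term cannot be killed uniformly.

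The main obstacle I anticipate is precisely showing that the co-adapted coupling, which is free to choose $O_t$ adaptively, cannot conspire to cancel the area production over all time. An adversarial coupling could try to use the antisymmetric (rotational) degree of freedom in $O_t$ to pump area in the \emph{opposite} sign and cancel the accumulated $V_t$; the argument must show this cancellation is impossible to sustain because cancelling requires keeping the horizontal parts correlated in a way that itself feeds the quadratic variation of $V_t$. I expect the decisive inequality to come from a careful bookkeeping of $\E[V_t^2]$ in which every term the coupling can control either adds a nonnegative contribution or is itself bounded by the horizontal separation, so that the positive production term survives. Once $\liminf_{t\to\infty}\E[V_t^2]=+\infty$ (or even just that it is unbounded) is established, since $d_\He(\B_t,\B'_t)^2\ge c\,|V_t|$ for the homogeneous distance and $\E[|V_t|]\ge \E[V_t^2]/\E[V_t^2]^{1/2}$ type estimates combined with uniform integrability control, I would conclude $\limsup_{t\to+\infty}\E[d_\He(\B_t,\B'_t)^2]=+\infty$, which is the claim.

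Finally I would double-check the reduction to the squared area: one must verify that $\E[|V_t|]\to\infty$ genuinely forces $\E[d_\He^2]\to\infty$ and that the horizontal term, while possibly bounded, does not interfere. This uses the comparison $d_\He(a,b)^2\asymp |u|^2+|v|$ where $(u,v)$ are the horizontal and vertical components of $b^{-1}\!\cdot a$, so that a lower bound on $\E[|V_t|]$ transfers directly to a lower bound on $\E[d_\He^2]$ up to the universal gauge constant.
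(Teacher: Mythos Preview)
Your proposal has a genuine gap that prevents it from reaching the $L^2$ conclusion. The differential-inequality approach you sketch, namely showing $\tfrac{d}{dt}\E[V_t^2]\ge c>0$, is essentially the argument the paper uses for the weaker $L^\infty$ (and, with a refinement, $L^4$) result; see the proof of Theorem~\ref{thm:Winfty} and Remark~\ref{rem:W4}. The problem is that under the contradiction hypothesis $\sup_t\E[d_\He(\B_t,\B'_t)^2]<\infty$ you only control $\E[R_t^2]$ and $\E[|Z_t|]$, not $\E[Z_t^2]$. When you expand $d(Z_t^2)$ you find a drift term of the form $Z_t(K^{1,2}-K^{2,1})$; an adversarial coupling can make this very negative precisely on the events where $|Z_t|$ is large, and with only an $L^1$ bound on $Z_t$ you cannot absorb that term by Cauchy--Schwarz as in the $L^\infty$ proof. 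Your final reduction is also backwards: Jensen gives $\E[|V_t|]\le \E[V_t^2]^{1/2}$, not the reverse, so $\E[V_t^2]\to\infty$ does not by itself force $\E[|V_t|]\to\infty$.

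The paper's route is genuinely different and more delicate. It works directly with the semimartingale decomposition $Z_t=M_t-A_t$ of the relative area (not its square). First, from $\E[R_t^2]\le C^2$ it extracts $\E\int_0^\infty(1-J^{1,1})+(1-J^{2,2})\,ds<\infty$, which via Cauchy--Schwarz forces the expected total variation of the drift $A$ on $[0,t]$ to grow at most like $\sqrt{t}$. Second, using that $R_t$ stays above $R_0/2$ with positive probability (a Dambis--Dubins--Schwarz argument), the quadratic variation of $M$ accumulates at a linear rate on a set of fixed positive probability; a lemma (Lemma~\ref{lem:MG}) then gives a uniform lower bound $\E[|M_{t+h}-M_t|]\ge 10C^2$ for some fixed $h$. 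Since $\E[|Z_t|]\le C^2$, the triangle inequality forces $\E[|A_{t+h}-A_t|]\ge 8C^2$, so the drift's total variation grows \emph{linearly} --- contradicting the $\sqrt{t}$ bound. The missing ingredient in your sketch is precisely this $L^1$ lower bound on martingale increments from a quadratic-variation lower bound on an event of positive probability; that lemma is what allows the argument to stay entirely in $L^1$ for $Z_t$.
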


\subsection{Two positive results}
To complete the picture, we provide two positive results. We first  show that the coupling by reflection on the Heisenberg group stays bounded in  $L^p$ for $0< p <1$. We recall that for  $0< p<1$, the quantity  $ \E\left[d^p (\B_t, \B'_t) \right]^\frac{1}{p} $ is not a distance, but only a quasidistance, in the sense that the triangle inequality only holds up to a multiplicative constant.

\begin{theo}\label{thm:refl}
 Let $(\B_t)_{t\geq 0}$ and  $(\B'_t)_{t\geq 0}$ be a coupling by reflection of two Heisenberg Brownian motions starting in $(x,y,z)$ and $(x',y',z')$. Then, for every $p\in (0,1)$, 
 \begin{equation}\label{eq:sup-alpha}
   \sup_{t\geq 0} \E\left[d_\He ( \B_t,\B'_t)^p \right] < +\infty.  
 \end{equation}

 Moreover, for the coupling by reflection,  for every $p\in (0,1)$, we also have:
 \begin{equation}\label{eq:sup-rapport}
 \sup_{a\neq a'\in\He} \sup_{t\geq 0}  \frac{ \E[ d_{\He}(\B_t^a, \B_t^{a'})^p ]}{ d_{\He} (a,a')^p}<+\infty
\end{equation} 
 \end{theo}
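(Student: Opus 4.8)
The plan is to exploit that a Heisenberg Brownian motion is a deterministic (Lévy-area) functional of its horizontal planar part, so that coupling by reflection amounts to reflecting the two planar Brownian motions and everything reduces to a one-dimensional analysis. By left-invariance of $d_\He$ and of the sub-Laplacian, and by rotation invariance of the horizontal plane (both preserving the reflection coupling), I would first reduce to $a=(0,0,0)$ and to the case where the initial horizontal difference points along the first axis. Writing $\B_t=(X_t,Y_t,Z_t)$ and $\B'_t=(X'_t,Y'_t,Z'_t)$ with $X_t=B^1_t$, $Y_t=B^2_t$ for a standard planar $(B^1,B^2)$, the reflection coupling gives $dX'=-dB^1$, $dY'=dB^2$ up to the coalescence time $\tau=\inf\{t:\rho_t=0\}$ of the horizontal parts, where $\rho_t=\rho_0+2B^1_t$ (a one-dimensional Brownian motion absorbed at $0$) is the horizontal distance; after $\tau$ the two processes move together. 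In particular $Y_t=Y'_t$ for $t\le\tau$. Using the comparison of $d_\He$ with the homogeneous norm $\lVert(a,b,c)\rVert\asymp\sqrt{a^2+b^2}+\sqrt{|c|}$ applied to the left-increment $\B_t^{-1}\B'_t=(\Delta X,\Delta Y,W_t)$, where $W_t=Z'_t-Z_t-\tfrac12(X_tY'_t-Y_tX'_t)$, subadditivity of $x\mapsto x^{p}$ for $p<1$ yields $d_\He(\B_t,\B'_t)^p\lesssim \rho_{t\wedge\tau}^{\,p}+|W_t|^{p/2}$, so it suffices to bound the two terms uniformly in $t$.

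The horizontal term is immediate: $\rho_{t\wedge\tau}$ is a nonnegative martingale with $\E[\rho_{t\wedge\tau}]=\rho_0$, and concavity gives $\E[\rho_{t\wedge\tau}^{\,p}]\le\rho_0^{\,p}$ for $p<1$. For the vertical term, an Itô computation is the key simplification: using $Y=Y'$ one finds that there is no bounded-variation part and that the $dB^1$-contributions cancel, leaving $dW_t=-\rho_t\,dB^2_t$ (frozen after $\tau$). Thus $W$ is a martingale with $\langle W\rangle_\infty=A:=\int_0^\tau\rho_s^2\,ds$, and the Burkholder--Davis--Gundy inequality for the exponent $p/2$ gives $\sup_{t\ge0}\E[|W_t|^{p/2}]\le |W_0|^{p/2}+C_p\,\E[A^{p/4}]$. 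Everything has therefore been reduced to the single estimate $\E[A^{p/4}]<\infty$ for $p<1$, i.e.\ to a fractional moment of order $<1/4$ of the additive functional $A=\int_0^\tau\rho_s^2\,ds$ of the absorbed Brownian motion $\rho$.

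This fractional moment bound is the heart of the argument, and I would obtain it from the tail estimate $\P(A>t)\le C\,t^{-1/4}$, which is exactly what makes the threshold $p<1$ sharp (since $\E[A^{q}]<\infty\iff q<1/4$). Let $M=\max_{[0,\tau]}\rho$ be the maximal height reached before absorption. On $\{M\le m\}$ one has $\rho\le m$ on $[0,\tau]$, hence $A\le m^2\tau$, so that $\P(A>t)\le \P(M>m)+\P(\tau>t/m^2)$. The two marginal tails are classical: $\P(M\ge m)=\rho_0/m$ by the gambler's ruin estimate for Brownian motion, and $\P(\tau>s)\le C\,\rho_0\, s^{-1/2}$ by the reflection principle. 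Plugging these in and optimizing over $m$ (the optimal choice being $m\asymp t^{1/4}$) yields $\P(A>t)\le C\,t^{-1/4}$, whence $\E[A^{p/4}]<\infty$ for every $p<1$. The point to state carefully is that $M$ and $\tau$ are strongly positively correlated, so one must avoid any independence heuristic; the crude but robust splitting above circumvents this and is what I expect to be the main technical step.

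Combining the three estimates proves \eqref{eq:sup-alpha}. For the uniform ratio \eqref{eq:sup-rapport}, I would keep track of the dependence on the initial configuration: Brownian scaling gives $A\stackrel{d}{=}\rho_0^{4}A^{(1)}$, where $A^{(1)}$ is the functional started from $\rho_0=1$, so that $\E[A^{p/4}]=\rho_0^{\,p}\,\kappa_p$ with $\kappa_p:=\E[(A^{(1)})^{p/4}]<\infty$. After the reduction $a=(0,0,0)$, $a'=(x',y',z')$, one has $\rho_0=\sqrt{x'^2+y'^2}\lesssim d_\He(a,a')$ and $|W_0|=|z'|\lesssim d_\He(a,a')^2$ by the homogeneous norm comparison, whence $\E[d_\He(\B_t,\B'_t)^p]\lesssim \rho_0^{\,p}+|W_0|^{p/2}+\rho_0^{\,p}\kappa_p\lesssim d_\He(a,a')^p$ uniformly in $t$ and in the pair $(a,a')$. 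Dividing by $d_\He(a,a')^p$ and taking the supremum gives \eqref{eq:sup-rapport}.
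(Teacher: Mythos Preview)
Your proof is correct, and it takes a genuinely different and more elementary route than the paper's. The paper establishes \eqref{eq:sup-alpha} as a corollary of Proposition~\ref{pro:assymp}, which computes the precise asymptotics of $\E[|Z_t|^{p}]$ for all $p>0$ by writing $Z_t=W_{T(t)}$ via Dambis--Dubins--Schwarz, splitting on $\{\tau\le t\}$ versus $\{\tau>t\}$, and invoking convergence of the conditioned process $(\tilde R_\lambda)_{\lambda\in[0,1]}$ to the normalised Brownian excursion together with uniform integrability lemmas (Lemmas~\ref{lem:unif-int} and~\ref{lem:bessel}). This yields sharp rates (logarithmic at $p=1/2$, polynomial above) but is considerably heavier machinery than the theorem requires. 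Your argument bypasses all of this: once $dW_t=\pm\rho_t\,dB^2_t$ is identified (your sign is opposite to the computation but irrelevant), the Burkholder--Davis--Gundy inequality for exponents in $(0,1)$ reduces everything to the single moment $\E[A^{p/4}]$ with $A=\int_0^\tau\rho_s^2\,ds$, and your tail splitting $\P(A>t)\le\P(M>m)+\P(\tau>t/m^2)$ with the optimal choice $m\asymp t^{1/4}$ gives $\P(A>t)\lesssim\rho_0\,t^{-1/4}$ by elementary hitting-time estimates. This is both shorter and more robust; it also makes the scaling $\E[A^{p/4}]=\rho_0^{\,p}\kappa_p$ transparent, so that \eqref{eq:sup-rapport} follows immediately, whereas the paper leaves the passage from Proposition~\ref{pro:assymp} (stated for $Z_0=0$) to the uniform ratio somewhat implicit. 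What the paper's approach buys in exchange is the exact asymptotic behaviour of $\E[|Z_t|^p]$ in all three regimes, which in particular shows that your threshold $p<1$ is sharp and that the reflection coupling fails \eqref{eq:sup-alpha} already at $p=1$.
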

 
Unfortunately, the above result is false for the reflection coupling for $p\geq 1$ (as a close look at  Proposition \ref{pro:assymp} shows).

In the general context of co-adapted coupling, we were not able  to obtain any results for $p \in [1,2)$: we ignore whether  there exist co-adapted couplings satisfying  \eqref{eq:sup-alpha} or  \eqref{eq:sup-rapport}, or not, for $p\in [1,2)$. One difficulty in this study is to obtain estimates for the expectation of nonnegative (nonconvex) functionals of martingales as typically $x\mapsto |x|^{1/2}$, see Remark \ref{rem:concave}.

The next remark recalls the situation of $L^p$-Wasserstein control in the case of Riemannian manifolds.
\begin{remark}
On a Riemannian manifold $M$, for $p\geq 1$, the $L^p$ version of the $L^\infty$ control \eqref{eq:LiK}, namely 
\[
 \mathcal{W}_p(\mu^a_t,\mu^{a'}_t)\leq e^{-kt/2}d(a,a')\textrm{ for all } t\geq 0,\,a\in M,\,a'\in M;
 \]
is satisfied  if and only if the Ricci curvature is bounded from below by $k$ (see \cite{VRS} and \cite[Remark 2.3]{Ku}). Therefore all the above $L^p$-Wasserstein controls, for $p\in [1,\infty]$ are equivalent and only depend on the Ricci curvature lower bound.
Moreover in this situation, as said before, one has the existence of some  appropriate Markovian  coupling such that almost surely:
\[
d( B_t^a, B_t^{a'}) \leq  e^{-kt/2}d(a,a') \textrm{ for all }t\geq 0,\,a\in M,\,a'\in M.
\] 
 \end{remark}

We now turn to the second positive result. We propose an explicit coupling of the laws $\mu^a_t=\law(\B^a_t)$ and $\mu^{a'}_t=\law(\B^{a'}_t)$. This coupling is not at all dynamical and is made at a given fixed time $t$. This coupling has thus no interpretation in terms of co-adapted coupling. It provides a new proof of the case $p=1$ in Theorem \ref{thm:Li-K}. This is the weakest result in the spectrum of $L^p$-Wasserstein controls ; the strongest result is for $p=\infty$. However, we stress that, apparently, our static coupling provides the first direct proof (that is, not obtained by duality) of the case $p=1$.

\begin{theo}\label{thm:static}
There exists $C>0$ such that for every $t\geq 0$ and $a=(x,y,z), a'=(x',y',z')\in \He$, there is a random vector $(X,Y,Z,X',Y',Z')$ of marginals $\mu^a_t=\law(\B^a_t)=\law(X,Y,Z)$ and $\mu^{a'}_t=\law(\B^{a'}_t)=\law(X',Y',Z')$ such that
\begin{align*}
&\mathcal{W}_1(\mu^a_t,\mu^{a'}_t)\leq \E(d_\He((X,Y,Z),(X',Y',Z')))\leq Cd_\He(a,a')\\
\intertext{and}
&\left\{
\begin{aligned}
X'=X+(x'-x)\quad\text{almost surely,}\\
Y'=Y+(y'-y)\quad\text{almost surely.}
\end{aligned}
\right.
\end{align*}
\end{theo}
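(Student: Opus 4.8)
The plan is to build the coupling by hand after two standard reductions, and then to concentrate all the work on a one-dimensional coupling of Lévy areas.

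First I would reduce the problem using the invariances of $\He$. Since the Heisenberg Brownian motion is left-invariant and $d_\He$ is left-invariant, left-translating both processes by $a^{-1}$ replaces the pair $(a,a')$ by $(0,b)$ with $b=a^{-1}\cdot a'=(\beta_1,\beta_2,\gamma)$; this operation shifts both horizontal coordinates by a common constant, hence it preserves the requested constraint $X'=X+\beta_1$, $Y'=Y+\beta_2$ and preserves $d_\He(a,a')=\|b\|_\He\asymp|\beta|+\sqrt{|\gamma|}$, where $|\beta|:=\sqrt{\beta_1^2+\beta_2^2}$. Next, applying the homogeneous dilation $\delta_{1/\sqrt t}$ (an automorphism sending $\mu_t^0$ to $\mu_1^0$ and $\mu_t^b$ to $\mu_1^{\delta_{1/\sqrt t}b}$, scaling $d_\He$ by $1/\sqrt t$, and again shifting horizontal coordinates by a common vector) reduces everything to time $t=1$, at the cost of letting $b$ range over all of $\He$. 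So it suffices to couple $\mu_1^0$ and $\mu_1^{b}$, for every $b=(\beta_1,\beta_2,\gamma)$, with synchronized horizontal parts and $\E[d_\He]\le C\,\|b\|_\He$ for a constant $C$ independent of $b$.

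I would then realize the first process as $(X,Y,Z)=(U,V,A)$, where $(U,V)$ is the standard planar Gaussian at time $1$ and $A$ is the associated Lévy area, and the second as $b\cdot(U,V,\tilde A)$; the synchronization $X'=X+\beta_1$, $Y'=Y+\beta_2$ forces the same horizontal endpoint $(U,V)$ for both, leaving only the joint law of the two areas $A,\tilde A$ to be chosen. A direct computation of $(X,Y,Z)^{-1}\cdot(X',Y',Z')$ shows its horizontal part is the deterministic vector $(\beta_1,\beta_2)$, while its vertical part is
\[
V_{\mathrm{vert}}=\gamma+(\tilde A-A)+(\beta_1 V-\beta_2 U).
\]
Since $d_\He\asymp|\beta|+\sqrt{|V_{\mathrm{vert}}|}$, the problem collapses to choosing the coupling of $(A,\tilde A)$ so that $\E\sqrt{|V_{\mathrm{vert}}|}\lesssim|\beta|+\sqrt{|\gamma|}$. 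The natural target is $V_{\mathrm{vert}}=\gamma$, i.e. to force $\tilde A-A=-(\beta_1 V-\beta_2 U)$.

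Here lies the main obstacle. Conditionally on the common endpoint $(U,V)$, both $A$ and $\tilde A$ must carry the same law, namely the conditional Lévy-area law $\mathcal L(U,V)$, so $\tilde A-A$ has conditional mean zero and the drift $s:=\beta_1 V-\beta_2 U$ cannot be cancelled in expectation by any choice of coupling. I would overcome this with a maximal (overlap) coupling: conditionally on $(U,V)$, couple $A$ and $\tilde A$ so that $\tilde A=A-s$ holds on an event of probability $1-\|\mathcal L(U,V)-\tau_s\mathcal L(U,V)\|_{TV}$ (with $\tau_s$ the translation by $s$), on which $V_{\mathrm{vert}}=\gamma$ exactly. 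The analytic input is a quantitative study of the conditional area law through its explicit (Gaveau-type) density: a total-variation bound $\|\mathcal L(U,V)-\tau_s\mathcal L(U,V)\|_{TV}\lesssim |s|$ times the $L^1$-norm of the density's derivative, together with a spread/tail estimate controlling $\sqrt{|V_{\mathrm{vert}}|}$ on the complementary "bad" event. Combining these, the good-event contribution is $\sqrt{|\gamma|}$ and the bad-event contribution is $\lesssim|\beta|$, which yields the claim. The delicate point — and the step I expect to be hardest — is making both estimates uniform in the endpoint $(U,V)$ and valid across the two regimes (small $b$, where the overlap coupling is essential and one must exploit that the area's conditional spread dominates $|s|$, and large $b$, where even the crude synchronous choice $\tilde A=A$ already suffices), so that the resulting constant $C$ is genuinely independent of $t$, $a$ and $a'$.
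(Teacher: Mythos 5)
Your strategy is sound and, at its core, parallels the paper's proof: the same reductions (left-translation, dilation), the same decision to synchronize the horizontal parts so that everything collapses to a coupling of the two conditional vertical laws given the common planar endpoint, the same observation that these conditional laws differ by the translation $s=\beta_1V-\beta_2U$ (your formula for $V_{\mathrm{vert}}$ agrees with the paper's intermediate measure $\tilde\mu=\law(X,Y,Z+x'Y)$), the same recognition that the synchronous choice only gives the insufficient order $\sqrt{|\beta|}$, and ultimately the same analytic input, namely the Li/BBBC heat-kernel bound $|\partial_z p_1/p_1|\leq C$, which is \eqref{eq:dzp1} in the paper. The differences are: (i) the paper first removes the vertical displacement $z'$ by a right-translation (an exact-cost coupling giving $2\sqrt{\pi|z'|}$), so it only treats $a'=(x',0,0)$ and never needs your cross-term $|\beta|\sqrt{|\gamma|}$ or the two-regime split; (ii) where you take a maximal (TV-overlap) coupling of the conditional area laws, the paper takes the coupling optimal for the concave cost $\sqrt{|z-z'|}$ and bounds its cost by the Kantorovich-norm argument of Lemma \ref{lem:sqrt}: transporting a density $f$ onto its translate by $s$ costs at most $|s|\int|f'(z)|\sqrt{|z|}\,dz$. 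That lemma delivers in one stroke exactly what your plan still owes.

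The step you flag as hardest is indeed the only place where a naive execution fails, so let me make it precise. Knowing only $\P(\mathrm{bad}\mid U,V)\lesssim |s|$ and applying Cauchy--Schwarz against the conditional spread of the area (which grows linearly in $|(U,V)|$) returns a bound of order $\sqrt{|\beta|}$ --- exactly the order you are trying to beat, so the TV estimate alone is not enough. What saves the argument is the location, not just the mass, of the residuals of the maximal coupling: on the bad event the (unnormalized) laws of $A-s$ and $\tilde A$ have densities bounded by $|f(a)-f(a+s)|\leq\int_0^{|s|}|f'(a\pm u)|\,du$, whence $\E\bigl[\sqrt{|V_{\mathrm{vert}}-\gamma|}\,\mathbf 1_{\mathrm{bad}}\mid U,V\bigr]\lesssim |s|\int\sqrt{|a|}\,|f'_{A|U,V}(a)|\,da$ up to lower-order terms. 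This weighted derivative integral is precisely the quantity appearing in the paper's Lemma \ref{lem:sqrt}; by \eqref{eq:dzp1} it is at most $C\,\E[\sqrt{|A|}\mid U,V]$, and integrating against $|s|\leq|\beta|\,|(U,V)|$ reduces everything to the finiteness of $\E\bigl[|(U,V)|\sqrt{|A|}\bigr]$, the exact counterpart of the paper's final integrability step $\E[|Y|\sqrt{|Z|}]<\infty$. With this completion, your good/bad decomposition and the two-regime argument (overlap coupling for $|\beta|\leq1$, synchronous coupling for $|\beta|\geq1$) close correctly; the two proofs then differ only in packaging.
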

The idea of the coupling is the following. The goal is to construct a random vector $((X,Y,Z);(X',Y',Z'))$ of marginals $\mu^a_t$ and $\mu^{a'}_t$. First we  perform a coupling of the horizontal parts $(X,Y)$ and $(X',Y')$ of the two Brownian laws by a simple translation. It appears that the  conditional laws of the last coordinates $\mathcal L(Z| (X,Y))$ and  $\mathcal L(Z'| (X',Y'))$  differ also only  by a translation;  but which depends on the value of $(X,Y)$. Eventually, we use a coupling of the last coordinates  which is well adapted to optimal transport for the cost $(z,z')\mapsto\sqrt{ |z-z' |}$ on the real line, and is better than the simple translation.
Note that our proof requires at the end an analytic estimate on the heat kernel, see  \eqref{eq:dzp1}.

We mention the interesting recent work by S. Banerjee, M. Gordina and P. Mariano \cite{BGM} where the authors also use non co-adapted couplings to study the decay in total variation for the laws of Heisenberg Brownian motions and obtain gradient estimates for harmonic functions. This work and our work seems to deliver a common message namely that co-adapted couplings are not the unique relevant couplings, what concerns obtaining gradient estimates.

\medskip
The paper is organised as follows. In Section \ref{sec:H}, we recall the  notion of co-adapted coupling and describe quickly the geometry of the Heisenberg group, its associated Brownian motions and their coupling. We also discuss some classical couplings. The proofs of the main theorems on the non-existence of co-adapted Heisenberg Brownian motions which stay at bounded distance are given Section \ref{sec:sec}.  The reflection coupling on $\He$ is studied in Section \ref{sec:refl}. The construction of the static coupling  between the Brownian laws is given in Section \ref{sec:bonus}.  The results are then generalised to the Heisenberg groups of higher dimension in the final section.

\section{ Co-adapted couplings on the Heisenberg group}
\subsection{The Heisenberg group}\label{sec:H}

The Heisenberg group can be identified with $\R^3$ equipped with the law:
\[
  (x,y,z)\cdot(x',y',z')= \left(x+x',y+y', z+z'+\frac{1}{2}(xy'-yx')\right).
 \]

 The left invariant vector fields are given by 
 \[
 \left\{ \begin{array} {l}
 X (f) (x,y,z)= \frac{d}{dt} _{|t=0} f( (x,y,z) \cdot (t,0,0 ) ) = \left( \partial_x - \frac{y}{2} \partial z \right) f(x,y,z) \\
  Y (f) (x,y,z)= \frac{d}{dt} _{|t=0} f( (x,y,z) \cdot (0,t,0 ) ) = \left( \partial_y + \frac{x}{2} \partial z \right) f(x,y,z)\\
Z (f) (x,y,z)= \frac{d}{dt} _{|t=0} f( (x,y,z) \cdot (0,0,t ) ) = \partial_z f(x,y,z).
\end{array} \right.
 \]
 
 Note that $[X,Y]=Z$ and that $Z$ commutes with $X$ and $Y$.
 
 We are interested in half the sub-Laplacian $L= \frac{1}{2} (X^2+Y^2)$. This is a diffusion operator that satisfies the H\"ormander bracket condition and thus the associated heat semigroup $P_t=e^{tL}$ admits a $\mathcal C^\infty$ positive kernel $p_t$.
 
 From a probabilistic point of view, $L$ is the generator of the following stochastic process starting in $(x,y,z)$:
 \[
 \B^{(x,y,z)}_t:= (x,y,z) \cdot \left( B^1_t,B^2_t, \frac{1}{2}\left( \int_0^t B^1_s dB^2_s -  \int_0^t B^2_s dB^1_s\right) \right)
 \]
 where $(B^1_t)_{t\geq 0}$ and  $(B^2_t)_{t\geq 0}$ are two standard independent 1-dimensional Brownian motions. The quantity $ \int_0^t B^1_s dB^2_s -  \int_0^t B^2_s dB^1_s$ that we denote by $A_t$ is one of the first stochastic integrals ever considered. It is the L\'evy area of the 2-dimensional Brownian motion $(B_t)_{t\geq 0} :=(B^1_t, B^2_t)_{t\geq 0}$.
  
It is easily seen that $(\B_t)_{t\geq 0} $ is a continuous process with independent and stationary increments. We simply call it the Heisenberg Brownian motion.

The sub-Laplacian $L$ is strongly related to the following  subRiemmanian distance (also called Carnot-Carath\'eodory) on $\He$:
\[
d_{\He} (a,a')= \inf_{\gamma}  \int_0 ^1  | \dot \gamma(t)  |_\h dt
\]
where $\gamma$ ranges over the horizontal curves connecting $\gamma(0)=a$ and $\gamma(1)=a'$. We remind the reader of the fact that a curve is said horizontal if it is absolutely continuous and $\dot \gamma (t) \in \textrm{Vect} ( X(\gamma(t)), Y(\gamma(t)))$ almost surely holds. The horizontal norm $|\cdot |_\h$ is a Euclidean norm on $\textrm{Vect} (X,Y)$ obtained by asserting that $(X,Y)$ is an orthonormal basis of $\textrm{Vect} (X(a),Y(a))$ at each point $a\in \He$. Finally the horizontal gradient $\nabla_\h f$ is $(Xf) X+(Yf)Y$.

 The Heisenberg group admits homogeneous dilations adapted both to the distance and the group structure. They are given by 
  \[
  \dil_\lambda (x,y,z)= (\lambda x, \lambda  y, \lambda^2 z)
  \]
for $\lambda>0$. They satisfy $d_\He(\dil_\lambda(a),\dil_\lambda(a'))=\lambda d_\He(a,a')$ and, in law:
 
\[
\dil _{\frac{1}{\sqrt t}} \left( B^1_t,B^2_t, \frac{1}{2}\left( \int_0^t B^1_s dB^2_s -  \int_0^t B^2_s dB^1_s\right) \right) \overset{\mathrm{Law}}{=}   \left( B^1_1,B^2_1, \frac{1}{2}\left( \int_0^1 B^1_s dB^2_s -  \int_0^1 B^2_s dB^1_s\right) \right). 
\]

The distance is clearly left-invariant so that $\trans_p:q\in \He\mapsto p.q$ is an isometry for every $p\in \He$. In particular
\[
d_{\He}(a,a)= d_{\He} (e, a^{-1} a')
\]
with $e=(0,0,0)$. Another isometry is the rotation $\rot_\theta:(x+iy,z)\in \mathbb{C}\times \R\equiv \He \mapsto (\mathrm{e}^{i\theta}(x+iy),z)$, for every $\theta\in \R$. Since  the explicit expression of $d_\He$ is not so easy, it is often simpler to work with a homogenous quasinorm (still in the sense that the triangle inequality only holds up to a multiplicative constant). We will use 
\[
H: a=(x,y,z)\in \He\mapsto\sqrt {x^2+y^2 + |z |}\in \R ,
\]
and the attached homogeneous quasidistance $d_H(a,a')=H(a^{-1}a')$. It satisfies 
\begin{equation}\label{eq:dist-eq}
c^{-1} d_H(a,a') \leq d_{\He}(a,a') \leq c d_H(a,a')
\end{equation}
for some constant $c>1$. We finally mention $d_\He((0,0,0),(x,y,0))=\sqrt{x^2+y^2}$ and $d_\He((x,y,z),(x,y,z+h))=2\sqrt{\pi |h|}$. 

\subsection{Co-adapted couplings}\label{sec:co-ad}

We first recall  the notion of  \emph{co-adapted} coupling of two processes. Indeed, in this study, we only  want to consider   couplings built solely knowing the  past of the two  processes. The definition below is taken from \cite[Definition 1.1.]{K2}.

\begin{defi}\label{def:co-adapted}
Given two continuous-time Markov processes $(X^{(1)}_t)_{t\geq0}$ , $(X^{(2)}_t)_{t \geq 0}$, we say that $(\tilde X^{(1)}_t,  \tilde X^{(2)}_t)_{t \geq 0}$ is a \emph{co-adapted coupling} of $(X^{(1)}_t)_{t\geq0}$ and $(X^{(2)}_t)_{t \geq 0}$ if $\tilde X^{(1)}$ and  $ \tilde X^{(2)}$ are defined on the same filtered probability space $(\Omega,  (\mathcal F_t)_{t\geq 0}, \mathbb P)$, satisfy $\law(X^{(i)}_t)_{t\geq 0}=\law(\tilde X^{(i)}_t)_{t\geq 0}$ for $i=1,2$, and 
\[
\tilde P_t^{(i)}f:z\mapsto\E\left[ f(\tilde X^{(i)}_{t+s}) |\, \mathcal F_s , \tilde X^{(i)}_s=z \right]
\]
equals
\[
P_t^{(i)}f:z\mapsto\E\left[ f(X^{(i)}_{t+s})|\,X^{(i)}_s=z \right],\quad\law(X^{(i)}_s)\text{-almost surely}
\]
 for $i=1,2$, for each bounded measurable function $f$, each $z$, each $s,t\geq 0$.
 \end{defi}
 
 If we moreover assume that the full process $(\tilde X^{(1)}_t,  \tilde X^{(2)}_t)_{t \geq 0}$ is Markovian, we say that the co-adapted coupling is Markovian.

 The next lemma describes more explicitly co-adapted couplings in the case of Brownian motion in $\R^2$ (see \cite[Lemma 6]{Ke_ecp}).
 
 \begin{lemme}\label{lem:gen-coupling}
Let $(B_t)_t$ and $(B'_t)_t$ be two co-adapted Brownian motions on $\R^2\times \R^2$  defined on some filtered probability space  $(\Omega, (\F_t)_{t\geq 0}, \P )$. Then, enriching the filtration if necessary,  there exists a Brownian motion $(\hat B_t)_{t\geq 0}$ defined on the same filtration $(\mathcal F_t)_{t\geq 0}$ and independent of $(B_t)_{t\geq 0} $ such that
 \begin{equation}\label{eq:co-adapted}
  d B'(t)= J(t) d B_t + \hat J(t) d \hat B_t
 \end{equation}
 where $(J_t)_{t\geq0}=((J_t^{i,j})_{1\leq i,j\leq 2})_{t\geq 0}$ and $\hat J$ are matrices satisfying
\begin{equation}\label{eq:co-BM}
  J J^T + \hat J \hat J^T= I_2
\end{equation}
and $J(t), \hat J (t) \in \F_t$.
\end{lemme}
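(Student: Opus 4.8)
The plan is to realize $B'$ as an It\^o integral against $B$ plus an independent ``innovation'' Brownian motion, exactly as one represents one Brownian motion relative to another. First I would record that co-adaptedness forces both $B$ and $B'$ to be Brownian motions with respect to the common filtration $(\F_t)$: the martingale-problem characterization implicit in Definition~\ref{def:co-adapted} makes each coordinate and each $B^i_t B^j_t-\delta_{ij}t$ an $\F_t$-martingale, so by L\'evy's characterization $B$ and $B'$ are $\F_t$-Brownian motions in $\R^2$; in particular $\langle B^i,B^j\rangle_t=\delta_{ij}t=\langle {B'}^i,{B'}^j\rangle_t$. Next I would consider the matrix of cross-brackets with entries $\langle {B'}^i,B^j\rangle_t$. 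By the Kunita--Watanabe inequality each such bracket has variation on $[s,t]$ at most $t-s$, hence is absolutely continuous in time; I let $J^{i,j}_t$ be the (bounded, predictable) Radon--Nikodym density, so that $\langle {B'}^i,B^j\rangle_t=\int_0^t J^{i,j}_s\,ds$ with $J(t)\in\F_t$.

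Then I would subtract off the part of $B'$ explained by $B$, setting $M_t:=B'_t-\int_0^t J_s\,dB_s$. This is a continuous $\F_t$-martingale, and by the very choice of $J$ one has $\langle M^i,B^j\rangle=0$ for all $i,j$, i.e. $M$ is orthogonal to $B$. A direct bracket computation using $\langle B',B'\rangle_t=I_2 t$ and the definition of $J$ gives $\langle M,M\rangle_t=\int_0^t\bigl(I_2-J_sJ_s^T\bigr)\,ds$. Since a bracket matrix is positive semidefinite, $I_2-JJ^T\succeq 0$ almost everywhere, and I would define $\hat J:=(I_2-JJ^T)^{1/2}$, the symmetric positive-semidefinite square root, which is a measurable (hence predictable) function of $J$; this immediately yields the required identity $JJ^T+\hat J\hat J^T=I_2$ and $\hat J(t)\in\F_t$.

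It remains to produce a Brownian motion $\hat B$, independent of $B$, with $M_t=\int_0^t\hat J_s\,d\hat B_s$, and this is the only delicate point, since $\hat J$ may be degenerate and cannot simply be inverted. The hard part will be the standard enlargement argument: I would adjoin an auxiliary standard Brownian motion $W$ independent of $\F_\infty$, pass to $\tilde\F_t=\F_t\vee\sigma(W_s:s\le t)$, and set $\hat B_t:=\int_0^t\hat J_s^{+}\,dM_s+\int_0^t\bigl(I_2-\hat J_s^{+}\hat J_s\bigr)\,dW_s$, where $\hat J^{+}$ is the Moore--Penrose pseudo-inverse. A bracket computation then shows $\langle\hat B,\hat B\rangle_t=I_2 t$, $\langle\hat B,B\rangle_t=0$, and $\int_0^t\hat J_s\,d\hat B_s=M_t$. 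Finally, since $(B,\hat B)$ is a continuous $\tilde\F_t$-martingale whose full covariation matrix is $I_4 t$, L\'evy's characterization shows $(B,\hat B)$ is a standard $\R^4$-Brownian motion, so $\hat B$ is a Brownian motion independent of $B$; combining everything gives $dB'_t=J_t\,dB_t+\hat J_t\,d\hat B_t$. The two obstacles to keep in mind are the absolute continuity of the cross-brackets (handled by Kunita--Watanabe, which also yields boundedness of $J$) and the degenerate-$\hat J$ representation (handled by the pseudo-inverse enlargement together with L\'evy's theorem for the independence).
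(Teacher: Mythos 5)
Your proposal is correct. Note that the paper itself does not prove this lemma: it is quoted from Kendall (the reference \cite[Lemma 6]{Ke_ecp}), so there is no internal proof to compare against; your argument is essentially the standard proof of this representation, which is also the one behind the cited result. All the key steps are sound: co-adaptedness in the sense of Definition~\ref{def:co-adapted} does make $B$ and $B'$ Brownian motions for the common filtration; Kunita--Watanabe gives absolute continuity of the cross-brackets with density $J$ bounded by $1$ and predictable; the remainder $M_t=B'_t-\int_0^t J_s\,dB_s$ is orthogonal to $B$ with $\langle M,M\rangle_t=\int_0^t(I_2-J_sJ_s^T)\,ds$, forcing $I_2-JJ^T\succeq 0$ and allowing $\hat J=(I_2-JJ^T)^{1/2}$; and the pseudo-inverse construction in an enlarged filtration, combined with L\'evy's characterization of $(B,\hat B)$ as a standard $\R^4$-Brownian motion, yields both the independence and the identity $dB'=J\,dB+\hat J\,d\hat B$. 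Two routine points deserve a word in a fully written version: first, $\hat J^+$ need not be bounded, but the integral $\int_0^t\hat J_s^+\,dM_s$ is still well defined because its bracket $\int_0^t\hat J_s^+\hat J_s^2\hat J_s^+\,ds=\int_0^t P_s\,ds$ is dominated by $t$ (here $P=\hat J^+\hat J$ is the projection onto the range of $\hat J$); second, the claim $\int_0^t\hat J_s\,d\hat B_s=M_t-M_0$ reduces to $\int_0^t(I_2-P_s)\,dM_s=0$, which one proves by checking that this local martingale has vanishing quadratic variation since $(I_2-P)\hat J^2(I_2-P)=0$. Both are implicit in your bracket computations, so the outline is complete as a scheme of proof.
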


In the following $\|\cdot\|$ may denote the operator norm of a matrix attached to the Euclidean norm, or the Euclidean norm of a vector.

\begin{lemme}\label{lem:deuxdeux}
Let $J$ be a $2\times2$ real matrix $J=\left(\begin{smallmatrix}a&b\\c&d\end{smallmatrix}\right)$. Then
$$0\leq J^TJ\leq I_2  \Longleftrightarrow \|J\|\leq 1 \Longleftrightarrow 0\leq JJ^T\leq I_2,$$
where $\leq$ is the ordering of symmetric matrices.
In particular
\begin{itemize}
\item $a^2+b^2$, $a^2+c^2$, $c^2+d^2$ and $b^2+d^2$ are smaller or equal to $1$,
\item all the four entries of $J$ are in $[-1,1]$.
\end{itemize}
\end{lemme}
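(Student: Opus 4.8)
The plan is to prove the chain of equivalences by establishing the two outer equivalences, each of which is a standard characterization of the operator norm in terms of the spectrum of a symmetric matrix. First I would observe that for any real matrix $J$ the product $J^TJ$ is symmetric and positive semidefinite, so $0\leq J^TJ$ holds automatically; the content is therefore the upper bound $J^TJ\leq I_2$. By definition of the ordering of symmetric matrices, $J^TJ\leq I_2$ means $\langle J^TJ\, v,v\rangle\leq\langle v,v\rangle$ for every $v\in\R^2$, i.e. $\|Jv\|^2\leq\|v\|^2$ for all $v$. Taking the supremum over unit vectors $v$, this is exactly the statement $\|J\|\leq 1$, which gives the first equivalence $0\leq J^TJ\leq I_2\Longleftrightarrow\|J\|\leq 1$. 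The symmetric argument applied to $JJ^T$, together with the elementary fact that $\|J\|=\|J^T\|$ (the operator norm of a matrix equals that of its transpose, since $\|J\|^2$ and $\|J^T\|^2$ are both equal to the largest eigenvalue of the common spectrum of $J^TJ$ and $JJ^T$), yields the third equivalence $\|J\|\leq 1\Longleftrightarrow 0\leq JJ^T\leq I_2$.

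Next I would derive the two itemized consequences. The cleanest route is to note that the diagonal entries of a symmetric matrix $S$ with $0\leq S\leq I_2$ lie in $[0,1]$, since the $(i,i)$ entry equals $\langle S e_i,e_i\rangle$ and the constraint forces $0\leq\langle Se_i,e_i\rangle\leq\langle e_i,e_i\rangle=1$. Applying this to $S=J^TJ$, whose diagonal entries are $a^2+c^2$ and $b^2+d^2$, gives $a^2+c^2\leq 1$ and $b^2+d^2\leq 1$; applying it to $S=JJ^T$, whose diagonal entries are $a^2+b^2$ and $c^2+d^2$, gives the remaining two bounds $a^2+b^2\leq 1$ and $c^2+d^2\leq 1$. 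The bound on the individual entries is then immediate: each of $a,b,c,d$ appears in one of these four sums of two squares, so its square is at most $1$ and hence each entry lies in $[-1,1]$.

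There is no genuine obstacle here; the statement is a routine piece of linear algebra, and the only point that requires a moment's care is the identity $\|J\|=\|J^T\|$ used to connect the two sides of the equivalence. I would justify it by recalling that $\|J\|^2$ is the largest eigenvalue of $J^TJ$ and $\|J^T\|^2$ is the largest eigenvalue of $JJ^T$, and that these two matrices have the same nonzero eigenvalues (a standard fact, following for instance from the singular value decomposition of $J$). Thus all three conditions in the lemma are equivalent to the single scalar condition that the largest singular value of $J$ is at most $1$, and the displayed equivalences and consequences follow.
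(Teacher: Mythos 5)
Your proof is correct and follows essentially the same route as the paper: both reduce the matrix inequalities to the quadratic-form statement $\|Jv\|\leq\|v\|$ on unit vectors and then transfer to $JJ^T$ via the identity $\|J\|=\|J^T\|$. The only cosmetic differences are that the paper justifies $\|J\|=\|J^T\|$ by the duality $\|J\|=\sup_{x,y\in\mathbb{S}^1}(Jx,y)$ rather than by the common nonzero spectrum of $J^TJ$ and $JJ^T$, and that you spell out the diagonal-entry argument for the itemized consequences, which the paper leaves implicit.
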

\begin{proof}
Let $\mathbb{S}^1=\{(\cos(\theta),\sin(\theta))\in \R^2:\,\theta\in \R\}$ be the Euclidean sphere of $\R^2$ and $Q:x\in \mathbb{S}^1\mapsto (x,J^T J x)=\|Jx\|^2$. Therefore, $Q$ is bounded by $1$ if and only if $\|J x\|\leq1$, for all $x\in \mathbb{S}^1$. The bound $0\leq J^T J$ is trivially satisfied. The proof is completed by $\|J\|=\sup_{x,y\in \mathbb{S}^1}(Jx,y)=\|J^T\|$.
\end{proof}

\begin{remark}
A necessary and sufficient condition can be found considering $\lambda$, the greatest eigenvalue of $J^TJ$. It writes
$$2\lambda=(a^2+b^2+c^2+d^2)+\sqrt{(a^2+b^2+c^2+d^2)^2-4(bc-ad)^2}\leq2\times1.$$
Paradoxically, it not easy to deduce $|a|,|b|,|c|,|d|\leq 1$ from this condition.
\end{remark}

\subsection{Co-adapted couplings on $\He$}
We now describe co-adapted Heisenberg Brownian motions. As seen before, a Brownian motion $\B$ is entirely determined by its two first coordinates $(B_t)_t=(B_t^1,B^2_t)_t$; the third one being $(A_t)_t$ the L\'evy area swept by this 2-dimensional process $(B_t)_t$.

Thus two Heisenberg Brownian motions  $(\B_t)_t=(B_t^1, B_t^2, A_t)_t$ and  $(\B'_t)_t=(B'^1_t, B'^2_t, A'_t)_t$  on $\He$ are co-adapted if and only if $B =(B_t^1,B_t^2)_t$ and $B' =(B'^1_t,B'^2_t)_t$ are two co-adapted Brownian motions on $\R^2$ and if moreover their third coordinates satisfy
 \[
  dA_t=\frac{1}{2} \left( B^1_t dB^2_t - B^2_t dB^1_t\right) 
  \] and 
  \[
  dA'_t=\frac{1}{2} \left( B'^1_t dB'^2_t - B'^2_t dB'^1_t\right.).
 \]
 
For the following, we denote by $J$ and $\hat J$ the matrices appearing in Lemma \ref{lem:gen-coupling}.

A computation gives:
\[
 \B'^{-1}_t \B_t=\left( B_t^1-B'^1_t, B_t^2-B'^2_t, B_t^3-B'^3_t - \frac{1}{2} \left( B^1_t B'^2_t - B^2_t B'^1_t \right) \right)
\]

and thus:
 
 \[
  d(\B'^{-1}_t \B_t)= \left(\begin{array}{c}
                           dB_t^1-d B'^1_t\\
                           dB_t^2-d B'^2_t\\
                           ( B_t^1-B'^1_t) \left(\frac{dB'^2_t+ dB_t^2} {2} \right) - (B_t^2-B'^2_t)\left(\frac{dB'^1_t+ dB_t^1} {2}\right) - \frac{1}{2}(d\langle B^1_t, B'^2_t \rangle - d\langle B^2_t , B'^1_t\rangle  )
                          \end{array}\right),
 \]
 where we used:
\[
 d(X_t Y_t)=X_t dY_t + Y_t dX_t+ d\langle X_t,Y_t\rangle.
\]

We denote by $R_t$ the horizontal distance between the two Brownian motions $B_t$ and $B'_t$  in $\R^2$, that is $R_t^2=(B^1_t-B'^1_t)^2 + (B^2_t-B'^2_t)^2$  and by $Z_t$ the third coordinate, the relative L\'evy area. Hence $Z_t= (\B'^{-1}_t \B_t)_3$.

The homogeneous distance $d_H(\B_t,\B'_t)$ is thus given by 
\[
\sqrt{R_t^2 + |Z_t|.}
\]
In the following, when $R_t>0$, we choose to work in the direct orthonormal (random moving) frame $(v_1,v_2)$ defined by taking $v_1(t)$ the normalised vector of $\R^2$ directed by  $B_t-B'_t$. Let $ Q_t$ be the matrix whose columns are respectively $v_1(t)$ and $v_2(t)$. In this new basis, for $(\alpha,\beta)\in \R^2$ and  $( \cdot \mid \cdot )$ the usual scalar product on $\R^2$, we have:
  \begin{align*}\label{eq:co-adapted-K}
(\alpha v_1 + \beta v_2 \mid  dB'_t ) 
&= \left( Q_t \begin{pmatrix} \alpha \\ \beta \end{pmatrix} \mid J_t d B_t  + \hat J_t  d \hat B_t\right)\\
  &= \left(  \begin{pmatrix} \alpha \\ \beta \end{pmatrix} \mid  (Q_t^TJ_tQ_t) Q_t^T d B_t  + (Q_t^T\hat J_tQ_t) Q_t^T d \hat B_t\right)\\
  &= \left(  \begin{pmatrix} \alpha \\ \beta \end{pmatrix} \mid  K_t d W_t  + \hat K_t  d \hat W_t \right)
 \end{align*}
 for $K_t= Q_t^T J_t Q_t$ and $\hat K_t= Q_t^T \hat J_t Q_t$,  and where $W$ and $\hat W$ are the two standard independent 2-dimensional Brownian motions  defined by 
  \[
  dW_t = Q_t^T  dB_t , \;   d\hat W_t = Q_t^T  d\hat B_t. 
 \]
 This can be summed up as follows:
 \[
 Q_t^T dB'_t=\underbrace{(Q_t^TJ_tQ_t)}_{K_t} Q_t^TdB_t  + \underbrace{(Q_t^T\hat J_tQ_t)}_{\hat K_t} Q_t^Td\hat B_t.
 \]
  The next easy lemma describes the relation between the matrices $J$ and $K$.
 \begin{lemme}\label{lem:JK}
 With the above notation, when $R_t>0$,
 \begin{itemize}
 \item  Equation \eqref{eq:co-BM} is satisfied for $(K,\hat K)$ if and only if it is satisfied for $(J,\hat J)$.
 \item $\tr\,  K= \tr \, J$.
 \item  $ K^{1,2}-K^{2,1}=J^{1,2}-J^{2,1}$.
 \end{itemize}
 \end{lemme}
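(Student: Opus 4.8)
The plan is to exploit the fact that, since $(v_1,v_2)$ is a \emph{direct} orthonormal frame, the matrix $Q_t$ belongs to $SO(2)$; in particular it is orthogonal ($Q_t^TQ_t=Q_tQ_t^T=I_2$) and commutes with every rotation. Throughout I fix a time $t$ with $R_t>0$ and drop the subscript, writing $K=Q^TJQ$ and $\hat K=Q^T\hat JQ$, so that $K$ and $\hat K$ are obtained from $J$ and $\hat J$ by orthogonal conjugation by the rotation $Q$. All three items are then immediate consequences of this conjugation structure.

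For the first item I would simply compute
\[
KK^T+\hat K\hat K^T=Q^T\bigl(JJ^T+\hat J\hat J^T\bigr)Q,
\]
using $QQ^T=I_2$ to collapse the inner factors. Conjugation by $Q$ is a bijection on symmetric matrices sending $I_2$ to $I_2$, so the right-hand side equals $I_2$ if and only if $JJ^T+\hat J\hat J^T=I_2$; this is exactly the claimed equivalence of \eqref{eq:co-BM} for $(K,\hat K)$ and $(J,\hat J)$. The second item is just the cyclic invariance of the trace: $\tr K=\tr(Q^TJQ)=\tr(JQQ^T)=\tr J$.

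For the third item I would first record that, for any $2\times2$ matrix $M=\left(\begin{smallmatrix}a&b\\c&d\end{smallmatrix}\right)$, the off-diagonal antisymmetric part can be written as $M^{1,2}-M^{2,1}=b-c=\tr(RM)$, where $R=\left(\begin{smallmatrix}0&-1\\1&0\end{smallmatrix}\right)$ is the generator of planar rotations. Applying this to $K$ and using cyclicity again gives
\[
K^{1,2}-K^{2,1}=\tr\bigl(RQ^TJQ\bigr)=\tr\bigl(QRQ^TJ\bigr).
\]
Since $Q\in SO(2)$ commutes with the rotation $R$, we have $QRQ^T=R$, and hence $K^{1,2}-K^{2,1}=\tr(RJ)=J^{1,2}-J^{2,1}$, as desired.

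There is no serious obstacle here: the whole lemma reduces to elementary $2\times2$ linear algebra once one notes that $K,\hat K$ are rotation-conjugates of $J,\hat J$. The only point that must not be overlooked is that $Q$ is a genuine rotation rather than an arbitrary orthogonal matrix — this is precisely what makes $QRQ^T=R$ in the third item (a reflection would flip the sign of the antisymmetric part), and it is exactly why the frame $(v_1,v_2)$ was chosen to be \emph{direct}.
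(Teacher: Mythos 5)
Your proof is correct and follows essentially the same route as the paper's: orthogonality of $Q$ handles the first two items, and the third item is reduced to the invariance of the antisymmetric part under conjugation by a rotation, written as a trace against $R=\left(\begin{smallmatrix}0&-1\\1&0\end{smallmatrix}\right)$ (the paper phrases the same fact as $Q^TMQ=(\det Q)M$ with $\det Q=1$). You also correctly isolate the one subtle point, namely that $Q$ must be a genuine rotation and not merely orthogonal, which is exactly the remark the paper makes.
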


\begin{proof} 
The first two relations follow from the fact that $Q$ is an orthogonal matrix.
For the last relation, one  can  note that $J^{1,2}-J^{2,1}= \tr (  J M )=\tr (K Q^TM Q)$ with  $M$ the matrix $(\begin{smallmatrix}0&-1\\1&0\end{smallmatrix})$.
 Now a computation gives $Q^TM Q= (\det Q) M$ and the last relation follows from the fact $Q$ is actually a rotation matrix.
\end{proof}

\

 The stochastic processes $R_t^2$ and $Z_t$ are semimartingales defined for all time $t\geq 0$. In the next statement, we provide stochastic differential equations for their evolution. 
 
\begin{lemme}\label{lem:SDE}
With the above notation, when $R_t\neq 0$, the processes $R_t^2$ and $Z_t$ solve the stochastic differential  equation:
\begin{align*}
\left\{
\begin{aligned}
d(R^2_{t})&= 2R_t \sqrt{2(1-K^{1,1})}\, dC_t + \left( 2(1-K^{1,1}) + 2(1-K^{2,2}) \right)dt\\
dZ_{t} &=\frac{R_t}{2} \, \sqrt{2(1 +  K^{2,2})}  \, d\tilde C_t + \frac{1}{2}( K^{1,2} - K^{2,1} )dt\\
\end{aligned}
\right.
\end{align*}
where   $(C_t)_{t\geq 0}$ and  $(\tilde C_t)_{t\geq 0}$ are some  1-dimensional Brownian motions whose
co-variation  satisfies:
\begin{equation}\label{SDE:C-C}
\langle\sqrt{2(1-K^{1,1})}dC_t,\sqrt{2(1+K^{2,2})}d\tilde{C}_t\rangle=(K^{1,2}-K^{2,1})dt .
\end{equation}
\end{lemme}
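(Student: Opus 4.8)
The plan is to work in the fixed coordinates, apply It\^o's formula to the scalar (hence frame-independent) quantities $R_t^2$ and $Z_t$, and to use the rotating orthonormal basis $(v_1,v_2)$ only to \emph{project} the instantaneous increments $dB_t$, $dB'_t$; since $(v_1,v_2)$ never has to be differentiated, no correction term coming from the motion of the frame appears. Throughout I work on $\{R_t>0\}$, where $(v_1,v_2)$ is well defined, and I use repeatedly that $dW_t=Q_t^TdB_t$ and $Q_t^TdB'_t=K_t\,dW_t+\hat K_t\,d\hat W_t$, with $W,\hat W$ independent planar Brownian motions, so that $(v_1\mid dB)=dW^1$, $(v_2\mid dB)=dW^2$, and $(v_i\mid dB')$ is the $i$-th row of $K_t\,dW_t+\hat K_t\,d\hat W_t$.

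For $R_t^2=|B_t-B'_t|^2$, writing $D_t=B_t-B'_t=R_tv_1(t)$ and applying It\^o gives $d(R_t^2)=2R_t\,(v_1\mid dB-dB')+\bigl(d\langle D^1\rangle_t+d\langle D^2\rangle_t\bigr)$. The differential $(v_1\mid dB-dB')=(1-K^{1,1})dW^1-K^{1,2}dW^2-\hat K^{1,1}d\hat W^1-\hat K^{1,2}d\hat W^2$ has quadratic variation $\bigl[(1-K^{1,1})^2+(K^{1,2})^2+(\hat K^{1,1})^2+(\hat K^{1,2})^2\bigr]dt$, which reduces to $2(1-K^{1,1})\,dt$ once the $(1,1)$-entry of \eqref{eq:co-BM} is used. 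By L\'evy's characterisation this produces a standard Brownian motion $C_t$ with $(v_1\mid dB-dB')=\sqrt{2(1-K^{1,1})}\,dC_t$ (on the set $\{K^{1,1}=1\}$, where $K^{1,1}\in[-1,1]$ by Lemma \ref{lem:deuxdeux}, the increment vanishes and one pads $C$ with an independent Brownian motion). For the drift, $d\langle D^1\rangle+d\langle D^2\rangle$ is the trace of the covariation matrix of $D$, which is orthogonally invariant and equals $\tr\bigl[(I_2-K)(I_2-K)^T+\hat K\hat K^T\bigr]dt$; expanding and using $\tr(KK^T+\hat K\hat K^T)=2$ gives exactly $\bigl(2(1-K^{1,1})+2(1-K^{2,2})\bigr)dt$.

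For $Z_t$ I start from the expression for $dZ_t$ obtained above and split it into its martingale and finite-variation parts. Setting $S=B+B'$, the martingale part is $\tfrac12\bigl(D^1dS^2-D^2dS^1\bigr)=\tfrac{R_t}{2}(v_2\mid dB+dB')$, since $v_2$ is $v_1$ rotated by $+\pi/2$. Now $(v_2\mid dB+dB')=K^{2,1}dW^1+(1+K^{2,2})dW^2+\hat K^{2,1}d\hat W^1+\hat K^{2,2}d\hat W^2$ has quadratic variation $2(1+K^{2,2})\,dt$ after invoking the $(2,2)$-entry of \eqref{eq:co-BM}, and this defines $\tilde C_t$ via $(v_2\mid dB+dB')=\sqrt{2(1+K^{2,2})}\,d\tilde C_t$ (with the same padding argument on $\{K^{2,2}=-1\}$). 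The drift is $-\tfrac12\bigl(d\langle B^1,B'^2\rangle-d\langle B^2,B'^1\rangle\bigr)$; computing $d\langle B^i,B'^j\rangle=(QK^TQ^T)_{i,j}\,dt$ and forming the antisymmetric combination yields $\tr(QK^TQ^TM)\,dt$ with $M=\left(\begin{smallmatrix}0&-1\\1&0\end{smallmatrix}\right)$, and the rotation identity $Q^TMQ=M$ (exactly as in the proof of Lemma \ref{lem:JK}) collapses this to the antisymmetric term $K^{2,1}-K^{1,2}$, giving the stated drift $\tfrac12(K^{1,2}-K^{2,1})\,dt$.

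It remains to establish the co-variation \eqref{SDE:C-C}. The cross-variation of the two martingale differentials $\sqrt{2(1-K^{1,1})}\,dC=(v_1\mid dB-dB')$ and $\sqrt{2(1+K^{2,2})}\,d\tilde C=(v_2\mid dB+dB')$, expanded over the independent components $W^1,W^2,\hat W^1,\hat W^2$, splits into an antisymmetric combination of the entries of $K$ plus the $(1,2)$-entry of $KK^T+\hat K\hat K^T$; the latter vanishes because this matrix equals $I_2$, and the former reduces to the same antisymmetric quantity $K^{1,2}-K^{2,1}$ that governs the drift of $Z_t$, which is \eqref{SDE:C-C}. The only genuine difficulty I foresee is the sign bookkeeping: fixing once and for all the orientation of $v_2$ and the convention for the relative L\'evy area so that the drift of $Z_t$ and \eqref{SDE:C-C} carry matching signs, and justifying rigorously, through L\'evy's theorem together with the padding on the degeneracy sets $\{K^{1,1}=1\}$ and $\{K^{2,2}=-1\}$, that $C$ and $\tilde C$ are bona fide Brownian motions rather than merely continuous local martingales.
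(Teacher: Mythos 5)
Your proposal is correct and follows essentially the same route as the paper's own proof: It\^o's formula for $R_t^2$ and $Z_t$, expansion of the martingale increments in the moving frame via $K,\hat K$ together with $KK^T+\hat K\hat K^T=I_2$, L\'evy's characterisation to produce $C$ and $\tilde C$ (your padding on the degeneracy sets is a point the paper leaves implicit), and the rotation identity $Q^TMQ=M$ for the drift of $Z_t$, which is exactly the content of Lemma \ref{lem:JK} that the paper invokes instead of inlining. The sign bookkeeping you flag at the end is a genuine but inconsequential wrinkle: with your (correct) expansion $(v_1\mid dB_t-dB'_t)=(1-K^{1,1})dW^1_t-K^{1,2}dW^2_t-\hat K^{1,1}d\hat W^1_t-\hat K^{1,2}d\hat W^2_t$ the covariation comes out as $(K^{2,1}-K^{1,2})\,dt$ rather than $(K^{1,2}-K^{2,1})\,dt$, whereas the paper's proof writes this expansion with the opposite overall sign (strictly speaking a sign slip), which is what makes \eqref{SDE:C-C} come out as stated; since the sign in \eqref{SDE:C-C} is never used elsewhere in the paper, either convention is harmless.
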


\begin{remark}\label{rem:vraicoord}
Actually the stochastic process $(R_t^2,Z_t)_{t\geq 0}$ is perfectly defined for all $t\geq 0$ (even when $R_t=0$). The technical problem in Lemma \ref{lem:SDE} is that the matrix $Q_t$ and thus the matrix  $K_t$ are only defined for $R_t\neq  0$. However, the matrix $J_t$ is defined for every value of $R_t$ and we have:

\begin{align*}
\left\{
\begin{aligned}
d(R^2_{t})&= \sigma_R( B_t, B'_t, J_t) \, d C_t + \left(  2(1-J^{1,1}) + 2(1-J^{2,2}) \right) dt\\
dZ_{t} &=  \sigma_Z( B_t, B'_t, J_t) \, d \tilde C_t  +  \frac{1}{2}( J^{1,2} - J^{2,1} )dt
\end{aligned}
\right.
\end{align*}
where $\sigma_R$ and $\sigma_Z$ are defined by:
\[
 \sigma_R( B_t, B'_t, J_t)= \left\{ \begin{array} {lcc}
 0 & \textrm{ if }  &  B_t= B'_t\\
2 R_t \sqrt{2( 1- (Q_t^T J_t Q_t)^{1,1})} & \textrm{ if }  &  B_t \neq B'_t\\
 \end{array}\right.
 \]
 and \[
 \sigma_Z( B_t, B'_t, J_t)= \left\{ \begin{array} {lcc}
 0 & \textrm{ if }  &  B_t= B'_t\\
\frac{R_t}{2} \sqrt{2( 1+(Q_t^T J_t Q_t)^{2,2})} & \textrm{ if }  &  B_t \neq B'_t.\\
 \end{array}\right.
 \]
Note finally that the fact that $\sigma_R$ and $\sigma_Z$ vanish for $R_t=0$ is rather clear from their expressions in Lemma \ref{lem:SDE}.
\end{remark}

\begin{proof}[Proof of Lemma \ref{lem:SDE}]
The computations are done in \cite{K}  but we repeat them for the sake of completeness.

 First by It\^o formula and with the previous notation:
 \begin{align*}
  dR^2_t&=d\left( (B^1_t -B'^1_t )^2 + (B^2_t -B'^2_t )^2 \right)\\
                                                           &= 2 R_t  \left(v_1 \mid (dB_t-dB'_t) \right) + d\langle (B^1_t -B'^1_t ) , (B^1_t -B'^1_t ) \rangle + d\langle (B^2_t -B'^2_t ) , (B^2_t -B'^2_t ) \rangle .
 \end{align*}
We turn to the  martingale part and  write
  \begin{align*}
 \left(v_1 \mid (dB_t-dB'_t) \right)&=  \left( (K^{1,1}-1) dW_t^{1}+ K^{1,2} dW_t^{2} + \hat K^{1,1} d \hat  W_t^{1}+  \hat K^{1,2} d \hat W_t^{2} \right)\\
                        & =  \sqrt{2(1-K^{1,1})} dC_t
 \end{align*}
 for some  1-dimensional Brownian motion $(C_t)_t$ where we used Lemma \ref{lem:JK} for
\[
 (K^{1,1})^2 +   (K^{1,2})^2+ ( \hat K^{1,1})^2+   ( \hat K^{1,2})^2=1.    
\]

The quadratic variation writes 
 \begin{align*}
d\langle (B^1_t -B'^1_t ) , (B^1_t -B'^1_t ) \rangle  \rangle &= (J^{1,1}-1)^2+ (J^{1,2})^2 + ( \hat J^{1,1})^2+  ( \hat J^{1,2})^2= 2- 2 J^{1,1}
 \end{align*} 
and similarly
 \begin{align*}
d\langle (B^2_t -B'^2_t ) , (B^2_t -B'^2_t ) \rangle  \rangle &= (J^{2,1})^2+ (J^{2,2}-1)^2 + (  \hat J^{2,1})^2+ ( \hat J^{2,2})^2= 2- 2 J^{2,2},
 \end{align*}
 thus 
\begin{align*}
 d\langle (B^1_t -B'^1_t ) , (B^1_t -B'^1_t ) \rangle + d\langle (B^2_t -B'^2_t ) , (B^2_t -B'^2_t ) \rangle  &= 2 \tr(I-J)
                                            = 2\tr(I-K). 
 \end{align*}

We turn now to $Z_t$. Using the basis $(v_1,v_2)$, we can rewrite
  \begin{align*}
   dZ_t =\frac{R_t}{2}  \left(v_2 \mid  (dB'_t+ dB_t) \right) - \frac{1}{2}(d\langle B^1_t, B'^2_t \rangle - d \langle B^2_t , B'^1_t\rangle  ).
  \end{align*}
As before, we  get:
  \begin{align*}
  \left(v_2 \mid  (dB'_t+ dB_t) \right)  & = K^{2,1} dW^{1}_t+(K^{2,2}+1) dW^{2}_t +  \hat K^{2,1} d\hat W^{1}_t+  \hat{K}^{2,2} d\hat W^{2}_t\\
                                 & = \sqrt{2(1+K^{2,2})}  d\tilde C_t
  \end{align*}
 for some  1-dimensional Brownian motion $(\tilde C_t)_t$. 
Moreover:
\[
 d\langle B^1_t, B'^2_t \rangle - d\langle B^2_t , B'^1_t\rangle = (J^{2,1} -J^{1,2}  )dt = (K^{2,1} -K^{1,2}  )dt .
\]

The equation on the covariation \eqref{SDE:C-C} follows since by \eqref{eq:co-BM} and Lemma \ref{lem:JK},
\[
 K^{1,1}  K^{2,1}+  K^{1,2}  K^{2,2} + \hat K^{1,1}  \hat K^{2,1}+   \hat K^{1,2}  \hat K^{2,2}=0. 
\]
\end{proof}

\begin{remark}\label{rem:autreseq}
Since we will use them in the following we also write stochastic differential equations satisfied by $R_t$, $R_t^4$ and $Z_t^2$, obtained
for $R_t\neq 0$ using It\^o's formula in Lemma \ref{lem:SDE}:
\begin{align*}
\left\{
\begin{aligned}
dR_{t}&=\sqrt{2(1-K^{1,1})} \, dC_t +\frac{1-K^{2,2}}{R_t} dt,\\
d R_t^4&=4R_t^3 \sqrt{2(1-K^{1,1})}\, dC_t + (4 R_t^2 (1-K^{2,2}))dt + 12 R_t^2 (1-K^{1,1}) dt,\\
d(Z_t^2)&= Z_t R_t \sqrt{2(1 +  K^{2,2})}\,  d\tilde C_t + \left( Z_t( K^{1,2} - K^{2,1} ) + \frac{R_t^2}{2} (1+  K^{2,2}) \right) dt.
\end{aligned}
\right.
\end{align*}
As in Remark \ref{rem:vraicoord}, an expression for $dZ^2_t$ is possible in the canonical basis with the matrix $J$ in place of $K$. According to Lemma \ref{lem:JK}, $K^{1,2}-K^{2,1}$ is replaced by $J^{1,2}-J^{2,1}$ and the factor $R_t$ make the undefined terms vanish when $R_t=0$. The same holds for the semimartingale $(R^4_t)_{t\geq 0}$. On the contrary $R_t$ is not a semimartingale, but only locally when $R_t>0$.
\end{remark}

\subsection{Description of some couplings}
In this section, we describe some interesting couplings.

\subsubsection{The synchronous coupling ($K^{1,1}=1$, $K^{2,2}=1$, $K^{1,2}=K^{2,1}=0$.)}\label{sub:synchro}
The coupling is called synchronous because the planar trajectories $(B_t)_{t\geq 0}$ and $(B'_t)_{t\geq 0}$ are parallel. Here $R_t \equiv R_0 $ and $Z_t=Z_0+ W_{R_0 t}$ with $W$ a Brownian motion.

\subsubsection{The reflection coupling ($K^{1,1}=-1$ $K^{2,2}=1$, $K^{1,2}=K^{2,1}=0$.)}\label{reflec}
For the reflection coupling the planar trajectories of $(B_t)_{t\geq 0}$ and $(B'_t)_{t\geq 0}$ evolve symmetrically with respect to the bisector of line segment $[B_0,B'_0]$. We stop the coupling when $R_t$ hits $0$ and continue synchronously with $J^{1,1}=J^{2,2}=1$. Denote by $\tau$ this hitting time.
One thus has
\[
R_t=R_0 + 2C_{t\wedge \tau} \textrm{ and } Z_t= Z_0+  \int_0^{t\wedge \tau} (R_0 + 2C_{s\wedge\tau} )d \tilde C_s 
\]
where $(C_s)_s$ and   $(\tilde C_s)_s$ are two independent Brownian motions (starting in 0) and with $\tau= \inf\{ s\geq 0, 2C_s=-R_0\}$. 
This coupling is studied in Section 4. On Euclidean and Riemannian manifolds, the efficiency of reflection coupling has been studied in \cite{HS13,Ku07}.

\subsubsection{Kendall's coupling:  ($K^{1,1}= \pm 1$, $K^{1,2}=K^{2,1}=0$ and $K^{2,2}=1$).} In \cite{K}, Kendall describes a coupling which alternates between synchronous coupling and reflection coupling. In order to avoid the use of local times the strategy of Kendall is defined with hysteresis. The regime swaps when the process $(R_t,|Z_t|)$ hits a certain parabola $\{8Z_t^2= \kappa^2 R_t^4\}$ or $\{8Z_t^2=(\kappa-\eps)^2 R_t^4\}$ (see \cite[Theorem 4]{K}), depending for the synchronous or the reflection coupling. Thus the process is not Markovian, but it is co-adapted. The author proves that this coupling is  successful: this means $T:=\inf\{ s\geq 0, \B_s=\B'_s\}$ is almost surely finite, or, equivalently, the  process $(R_t, Z_t)$ hits almost surely $(0,0)$ in finite time.

\subsubsection{The perverse coupling: $K^{1,1}=1$, $K^{2,2}=-1$, $K^{1,2}=K^{2,1}=0$.}
We assume $R_0>0$. It satisfies,
\[
 dR(t)=\frac{2}{R_t} dt \textrm{ and } dZ_t=0.
\]
 
 Thus the distance $R_t$ and $Z_t$ are deterministic and given by:  
 \[
  R_t =\sqrt {R_0^2 + 4t} \textrm{ and } Z_t=Z_0.
 \]
The name perverse coupling is given by Kendall in \cite{Ke_ecp} as a generic name for a repulsive coupling. Here, the planar components of $(\B_t)_t$ and $(\B'_t)_t$ are coupled in a perverse way. This particular method to produce a perverse coupling appears in \cite[Section 5]{PaPo} in a Riemannian setting.

\section{Non-existence of co-adapted Heisenberg Brownian motions at bounded distance.}\label{sec:sec}
We now turn to the proofs of  Theorems \ref{thm:Winfty} and \ref{thm:W2}. 
\subsection{Proof of Theorem \ref{thm:Winfty}}
 Theorem \ref{thm:Winfty} is clearly  a corollary of Theorem \ref{thm:W2} but we can prove it more easily and it already shows a clear difference with the Riemannian case. Hence, we first present a proof of this result.

\begin{proof}[Proof of Theorem \ref{thm:Winfty}.]
 Assume that $(\B_t)_t$ and  $(\B'_t)_t$  are two co-adapted Heisenberg Brownian motions starting in $a=(x,y,z)$ and $a'=(x',y',z')$ with $R_0=\sqrt{(x'-x)^2+(y'-y)^2}>0$. Striving for a contradiction we suppose that $t\mapsto d_\He(\B_t,\B'_t)$ is almost surely and uniformly bounded. More precisely for some $C>0$ we assume $|R_t|+|Z_t|^{1/2}\leq C$ for every $t\geq 0$.
 
 Using Lemmas \ref{lem:JK} and \ref{lem:SDE} (or simply Remark \ref{rem:vraicoord}), we have 
\begin{align*}
  \E[R_t ^2] = R_0^2 +\E\left[ 2\int_0^T (1-J^{1,1}(s)) + (1-J^{2,2}(s)) ds \right]
 \end{align*}
and $R_t\leq C$ gives $\E[R_t ^2]\leq C^2$ and
\begin{align}\label{eq:Efini0}
  \E\left[ 2\int_0^{t} (1-J^{1,1}(s)) + (1-J^{2,2}(s)) ds \right]\leq C^2.
\end{align}
Recall from Lemma \ref{lem:JK} that $K^{1,1}+K^{2,2}=J^{1,1}+J^{2,2}$ and from Lemma \ref{lem:deuxdeux} that the matrix entries are $\geq -1$. Therefore 
 \begin{align}\label{eq:Efini1}
  \max_{i\in\{1,2\}}\E\left[ \int_0^{t} \left((1-K^{i,i}(s))\right) \mathbf 1_{ \{R_s>0\} }  ds \right]\leq C^2/2
\end{align}
and $R_t\leq C$, again, gives
 \begin{equation}\label{eq:Efini2}
  \E\left[ \int_0^{t}   (1-K^{2,2}(s)) \frac{R_s^2}{2}  ds \right]\leq C^4/4.
 \end{equation}
Until now we have used $\E(R_t^2)\leq C^2$ and $R_t\leq C$. We turn to exploit $\E(Z_t^2)\leq C^4$.
Lemma \ref{lem:JK} and Remark \ref{rem:autreseq} give
\begin{align}\label{eq:Z2}
  \E[Z_t ^2] = Z_0^2 +\E\left[ 2\int_0^t Z_s( J^{1,2}(s) - J^{2,1}(s) )  ds  + \int_0^t \frac{R_s^2}{2} (1+  K^{2,2}(s))   ds \right]
 \end{align}
Adding \eqref{eq:Efini2} and \eqref{eq:Z2}, and using $\E(Z_t^2)\leq C^4$, we obtain
 \begin{equation}\label{eq:Efini3}
 \E\left[ 2\int_0^{t} Z_t( J^{1,2}(s) - J^{2,1}(s) ) ds  + \int_0^{t}R_s^2  ds \right] \leq (1+1/4)C^4.
 \end{equation}

Next, we aim to compare $\E\left[ 2\int_0^{t} Z_s( J^{1,2}(s) - J^{2,1}(s) ) ds\right]$ with $\E\left[\int_0^{t}R_s^2  ds \right]$ that both appear in \eqref{eq:Efini3}. On the one hand, since $Z_t$ stays bounded $\E[Z_t^2]$ is also bounded. Hence by Cauchy-Schwarz inequality (for the product measure on $\Omega\times [0,t]$):
 \begin{align} \label{un} \notag
   \left| \E\left[ \int_0^{t} Z_s( J^{1,2}(s) - J^{2,1}(s) )ds \right] \right| &\leq \left(\ \int_0^{t}  \E [Z_t^2] ds\right)^{1/2}
                                                                              \left(\int_0^{t}  \E (J^{1,2}- J^{2,1})^2ds\right)^{1/2}\\ \notag
                                                                 &\leq \left(\ \int_0^{t}  \E [Z_t^2] ds\right)^{1/2}
                                                                              \left(2\int_0^{t}  \E ((J^{1,2})^2+ (J^{2,1})^2)ds\right)^{1/2}\\ \notag
                                                                              &\leq \sqrt{C^4 t}   \left(2\int_0^{t}  \E (J^{1,2})^2 + \E (J^{2,1})^2ds\right)^{1/2}\\ 
                                                                              &\leq \sqrt{C^4 t}\cdot\sqrt{2C^2}=C^3\sqrt{2t}.
 \end{align}

The last estimate follows from Lemma \ref{lem:deuxdeux} (the rows and columns of $J$ have $L^2$-norm smaller than $1$), $1-J_{i,i}^2\leq (1-J_{i,i})(1+J_{i,i})$ and \eqref{eq:Efini0}: 
\begin{align*}
 \int_0^{t}  \E (J^{1,2})^2+\E (J^{2,1})^2 ds  &\leq  \int_0^{t}  \E [(1- (J^{1,1})^2)+(1- (J^{2,2})^2)] ds\\
                             &\leq  \int_0^{t}  2\E [(1-J^{1,1})+(1-J^{2,2})] ds \leq C^2.
\end{align*}
On the other hand, since $(R_t^2)_{t\geq 0}$ is a submartingale, $\E[R_s^2] \geq R_0^2$ and
\begin{align} \label{deux}
\int_0^{t}\E[R_s^2]ds \geq R_0^2 t.
\end{align}
Since $R_0>0$, \eqref{un} and \eqref{deux} provide a contradiction in \eqref{eq:Efini3} as $t$ goes to infinity.
\end{proof}

\begin{remark}\label{rem:W4} The proof of Theorem \ref{thm:Winfty} can be improved to show that any co-adapted Heisenberg Brownian motions can not stay bounded in $L^{4}$. In this proof, the only place where we fully use the fact that $R_t$ is uniformly bounded almost surely is \eqref{eq:Efini2}. At the other places we merely need that $\E[R_t^2]$ and $\E[Z_t^2]$ are bounded. But $\E[R_t^4]\leq C^4$ for every $t\geq 0$ is a sufficient assumption for \eqref{eq:Efini2} and, hence, for the proof. 

Indeed, by Lemma \ref{rem:autreseq} one has
 \[
  \E[R_t^4] = R_0^4 + \E\left[ \int_0^t 12 R_s^2(1-K^{1,1}(s)) + 4 R_s^2 (1-K^{2,2}(s)) ds \right].
 \]
This quantity is uniformly bounded by $C^4$ for every $t\geq 0$ so that \eqref{eq:Efini2} holds. (the bound in \eqref{eq:Efini2} can even be divided by two: $C^4/8$ in place of $C^4/4$).
\end{remark}

\subsection{Proof of Theorem \ref{thm:W2}}
To go beyond Theorem \ref{thm:Winfty}, we conduct a precise study of the expected total variation (or length in $L^1$) of the martingale part and of the drift part of $(Z_t)_{t\geq 0}$, the relative L\'evy area.
As before, the proof will be by contradiction. The principle is the following.
 We derive an upper bound for the drift part of $Z_t$ similar to  \eqref{un} from the proof of Theorem \ref{thm:Winfty}; and using   Lemma \ref{lem:MG} below, we provide a lower bound for the martingale part of $Z_t$.
 
 \begin{lemme}\label{lem:MG}
Let $(N_t)_{t\geq 0}$ be a continuous martingale  with $N_0=0$ and $p$ be in $]0,1[$. Then there exists $a_p>0$ such that for every positive real numbers $\beta$ and $h$, the estimate
 \[
  \P\left(\langle N  \rangle_h \geq \beta \right) \geq p
 \]
implies
\[
 \E[|N_h|] \geq a_p \sqrt {\beta}.
\]
\end{lemme}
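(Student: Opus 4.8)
The plan is to reduce everything to a one-dimensional Gaussian estimate via the Dambis--Dubins--Schwarz time change, and then to neutralise the possible correlation between ``$\langle N\rangle_h$ large'' and ``$N_h$ small'' by a truncation argument that only uses the marginal law of a Brownian motion at a fixed time.

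First I would dispose of the trivial case $\E[|N_h|]=+\infty$, and henceforth assume $N_h\in L^1$, so that $(N_s)_{0\le s\le h}$ is a uniformly integrable martingale closed by $N_h$. By the Dambis--Dubins--Schwarz theorem there is a standard Brownian motion $(W_s)_{s\ge 0}$, defined on a possibly enlarged probability space, with $N_t=W_{\langle N\rangle_t}$ for all $t$. Set $A=\{\langle N\rangle_h\ge\beta\}$, so that $\P(A)\ge p$ by hypothesis, and let $\sigma=\inf\{t\ge0:\langle N\rangle_t\ge\beta\}$. Then $\sigma\wedge h$ is a bounded stopping time, $A=\{\sigma\le h\}\in\mathcal F_{\sigma\wedge h}$, and on $A$ one has $\langle N\rangle_\sigma=\beta$ (by continuity of the quadratic variation), hence $N_{\sigma\wedge h}=N_\sigma=W_\beta$.

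Next I would push $N_h$ back to the stopping time $\sigma\wedge h$. Optional sampling gives $\E[N_h\mid\mathcal F_{\sigma\wedge h}]=N_{\sigma\wedge h}$, and since $A$ is $\mathcal F_{\sigma\wedge h}$-measurable, conditional Jensen yields
\[
\E[|N_h|]\ \ge\ \E[|N_h|\mathbf{1}_A]\ \ge\ \E\big[\,|N_{\sigma\wedge h}|\,\mathbf{1}_A\big]\ =\ \E[|W_\beta|\mathbf{1}_A].
\]
It remains to bound $\E[|W_\beta|\mathbf{1}_A]$ from below, and here is the only delicate point: $A$ may be strongly correlated with $\{W_\beta\approx 0\}$, so independence is unavailable. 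Instead I truncate at level $\lambda=\theta\sqrt\beta$:
\[
\E[|W_\beta|\mathbf{1}_A]\ \ge\ \lambda\,\P\big(A\cap\{|W_\beta|\ge\lambda\}\big)\ \ge\ \lambda\big(\P(A)-\P(|W_\beta|<\lambda)\big).
\]
Since $W_\beta$ is Gaussian $\mathcal N(0,\beta)$, its density is bounded by $(2\pi\beta)^{-1/2}$, whence $\P(|W_\beta|<\lambda)\le 2\lambda(2\pi\beta)^{-1/2}=\theta\sqrt{2/\pi}$, a bound that is blind to the correlation. Combining the two displays,
\[
\E[|N_h|]\ \ge\ \theta\sqrt\beta\,\Big(p-\theta\sqrt{2/\pi}\Big),
\]
and choosing $\theta=\tfrac{p}{2}\sqrt{\pi/2}$ (so that $\theta\sqrt{2/\pi}=p/2$) gives the claim with $a_p=\tfrac{p^2}{4}\sqrt{\pi/2}>0$.

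The main obstacle, and the very reason the statement is phrased with a probability $p$ rather than a moment bound, is exactly this correlation between $\langle N\rangle_h$ being large and $N_h$ being small. The orthogonality identity $\E[N_h^2]=\E[\langle N\rangle_h]$ only controls the second moment and cannot by itself rule out an endpoint that is small in $L^1$ while large in $L^2$. The truncation at level $\theta\sqrt\beta$, fed only by the uniform density bound for $W_\beta$, is precisely what decouples the two quantities and produces the lower bound linear in $\sqrt\beta$.
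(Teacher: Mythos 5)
Your proof is correct and takes essentially the same route as the paper's: a Dambis--Dubins--Schwarz time change, stopping when $\langle N\rangle$ first hits $\beta$ so that on $A=\{\langle N\rangle_h\ge\beta\}$ the stopped martingale equals $W_\beta$, followed by a lower bound for $\E\left[|W_\beta|\mathbf 1_A\right]$ that uses only the Gaussian law of $W_\beta$ and $\P(A)\ge p$. The only (harmless) difference lies in that last elementary step: the paper takes the sharp worst-case bound $a_p=\E\left[|G|\,\mathbf 1_{\{|G|\le \Phi^{-1}((1+p)/2)\}}\right]$ obtained by concentrating $A$ where $|W_\beta|$ is smallest, whereas you truncate at level $\theta\sqrt\beta$ and invoke the density bound, yielding the cruder but equally valid constant $a_p=\frac{p^2}{4}\sqrt{\pi/2}$.
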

 
 The proof of  Lemma \ref{lem:MG} is postponed at the end of the section.

\begin{proof}[Proof of Theorem \ref{thm:W2}.]

Let $C$ be $\sup_{t\geq 0}\sqrt{\max(\E(R_t^2),\E(|Z_t|))}$ and as before, assume $C<+\infty$ by contradiction. First recall
\begin{align*}
  \E\left(R^2_t\right) =  R_0^2 +\E\left(\int_0^{t}2[(1-J^{1,1})+(1-J^{2,2})]ds\right)\geq 0,
\end{align*}
which gives
\begin{align}\label{J11}
 \E\left(\int_0^{+\infty}[(1-J^{1,1})+(1-J^{2,2})]ds\right) \leq \frac{C^2}{2} <+\infty.
\end{align}
Let $T:=\inf\{t\geq 0, R_t=\frac{R_0}{2}\}$ be the hitting time  of $\frac{R_0}{2}$.
We show  that we can assume $\P(T=+\infty)>0$.
Suppose for the rest of this paragraph $\P(T<+\infty)=1$ and let  $S$ the finite random variable   defined by 
\[S=\int_0^T    2(1-K^{1,1})ds.\] 
Because of  the non-negative  drift in the stochastic differential equation of $R_{t}$, using  the Dambins-Dubins-Schwarz theorem (see e.g.  \cite{MR1725357}), the random variable  $S$ is greater in stochastic order than the hitting time of $\frac{R_0}{2}$ for a Brownian motion starting in $R_0$. This hitting time is almost surely finite but nonintegrable.

Thus $\E(S)=+\infty$ which contradicts \eqref{J11} (Recall from Lemma \ref{lem:JK} that $J^{1,1}+J^{2,2}=K^{1,1}+K^{2,2}$ and from Lemma \ref{lem:deuxdeux} that these quantities are $\geq -1$).

 Now, let us decompose the semimartingale $(Z_{t})_t=M_t-A_t$ into its martingale $M_t$ and its  bounded variation part $-A_t$. From Lemma \ref{lem:SDE}, we recall:
 
 \begin{align}\label{At}
  -A_t= \frac{1}{2} \int_0^t (J^{1,2}-J^{2,1} ) ds.
 \end{align}

Applying Cauchy--Schwarz inequality and following the same track as for \eqref{un} we obtain
\begin{align}\label{totvar}
\E\int_0^t | J^{1,2}- J^{2,1}|ds\leq{\sqrt{t}}\sqrt{\E\int_0^t (J^{1,2}- J^{2,1})^2ds}\leq {\sqrt{t}}\sqrt{2C^2}.
\end{align}
Remark now that the quantity on the left hand side is two times the expected total variation of $A_t$ on $[0,t]$.
 
We postpone the proof of the following result until the end of the (present) proof. It occurs as an application of Lemma \ref{lem:MG}: there exists $h>0$ such that for every $t>0$,
\begin{align}\label{martingale_part}
\E(|M_{t+h}-M_t|)\geq 10 C^2.
\end{align}
  
Since, we have assumed $E(|M_t-A_t|)\leq C^2$ for every $t\geq 0$, the triangle inequality implies $\E(|A_{t+h}-A_t|)\geq 8C^2$ for every $t\geq 0$.
The control of the expected total variation of $(A_t)$ expressed in \eqref{totvar}  and the lower estimate just proved give
\begin{equation}\label{eq:contradiction}
8C^2 n\leq \sum_{k=1}^n \E(|A_{kh}-A_{(k-1)h}|)\leq 
\frac{1}{2}  \E\int_0^{nh} |J^{1,2}-J^{2,1} | ds  \leq \sqrt{\frac{C^2h n}{2}},\end{equation}

which, as $n$ tends to $\infty$, provides a contradiction with our initial assumption that was $\sup_{t\geq 0}\sqrt{\max(\E(R_t^2),\E(|Z_t|))}\leq C$. We are left with the proof of \eqref{martingale_part} (under the assumption of the $L^2$ boundedness).

Recall $T=\inf\{t\geq 0:\, R_t=\frac{R_0}{2}\}$ and set $q=\P(T=+\infty)$. We have already proved $q>0$.
We shall show further that for $h\geq \frac{5C^2}{q}$,  
\begin{equation}\label{crochet-Mt}
 \P\left(\langle M\rangle_{t+ h} - \langle M\rangle_{t}  \geq \frac{R_0^2h}{8} \right)  \geq   q- \frac{C^2}{2h}  \geq \frac{9q}{10}.
\end{equation} 
We hence obtain \eqref{martingale_part} taking $h$ large enough in \eqref{crochet-Mt} and applying Lemma \ref{lem:MG} to $(M_{t+h}-M_t)_{h\geq 0}$.

Proof of \eqref{crochet-Mt}: considering only the event $\{T=+\infty\}$ for the martingale part of $Z_t$ described in Lemma \ref{lem:SDE}, one has
\begin{align}\label{eq:yoh}
 \langle M\rangle_{t+ h} - \langle M\rangle_{t} \geq \mathbf 1_{\{ T=+\infty\}}\left(\frac{R_0}{2}\right)^2  \int _t^{t+h}  \frac{1+K^{2,2}}{2} ds.  
\end{align}

Now, since by \eqref{J11} it holds
\[
 \E\left[\int_t^{t+h}  \mathbf 1_{\{ T=+\infty\}} \frac{1-K^{2,2}}{2}\, \mathrm{d} s\right]\leq C^2/4,
\]
taking the complementary set of $\{\int _t^{t+h}  \frac{1+K^{2,2}}{2} ds  \geq  \frac{h}{2}\}$ in $\{T=+\infty\}$ and using Markov inequality, one obtains:
\begin{align}\label{eq:yohyoh} \notag
  &\P\left(   \mathbf 1_{\{ T=+\infty\}} \int _t^{t+h}  \frac{1+K^{2,2}}{2} ds  \geq  \frac{h}{2}\right)\\ 
 = & q- \P\left( \mathbf 1_{\{ T=+\infty\}}  \int_t^{t+h}  \frac{1-K^{2,2}}{2} ds  > \frac{h}{2}\right) \geq  q- \frac{C^2}{4}\cdot\frac2h.
\end{align}

Hence, in \eqref{eq:yoh} we consider the probability that the right-hand side is greater than $(R_0/2)^2\cdot(h/2)$, which, with \eqref{eq:yohyoh}, gives the wanted estimate \eqref{crochet-Mt} for every $h\geq \frac{5C^2}{q}$.
\end{proof}

\begin{proof}[Proof of Lemma \ref{lem:MG}.]
Let $\phi$ be the quadratic variation of $N$
$$\phi(t) =\langle N  \rangle_t,$$
and consider the hitting time $\tau=\inf\{t\geq 0:\, \phi(t)\geq \beta\}$. Set $\psi(t)= \phi(t) \wedge \beta$.
The Dambins theorem shows that there exists a standard Brownian motion $(W_t)_{t\geq 0}$ such that for every $t\geq 0$,
\begin{align}\label{N_to_B}
 \E\left[|N_t|\right] \geq \E\left[ |N_{t\wedge \tau}| \right]  = \E\left[ | W_{\psi(t)}| \right].
\end{align}

Let now $A$ be the event $\{\omega\in \Omega:\, \phi(h)\geq \beta \}$ and recall the assumption $\P(A)\geq p$. One has
\begin{align}\label{event_A}
  \E\left[ | W_{\psi(h)}| \right] \geq   \E\left[ | W_{\psi(h)}|\cdot\mathbf1_{A} \right] =   \E\left[ | W_{\beta}|\cdot  \mathbf1_{A} \right] \geq a_p\, \sqrt{ \beta}
  \end{align}
where the constant $a_p$  is given by 
$$a_p=\E\left[ |G| \, \mathbf1_{\{|G| \leq \Phi^{-1}(\frac{1+p}{2})\}} \right]=\int_{\Phi^{-1}(\frac{1-p}2)}^{\Phi^{-1}(\frac{1+p}2)}|x|\frac{e^{-x^2/2}}{\sqrt{2\pi}}\mathrm{d} x.$$
with $G$ a standard normal random variable and $\Phi$ its cumulative distribution function. The lower bound in \eqref{event_A} is obtained for $\P(A)=p$ and the normal random variable $W_{\beta}$ of variance $\beta$ concentrated as much as possible close to zero on event $A$. Equation \eqref{N_to_B} for $t=h$ and \eqref{event_A} finally provide the wanted estimate.
\end{proof}

\begin{remark}\label{rem:concave}
The major constraint for generalising Theorem \ref{thm:W2} and its proof to a $L^p$-Wasserstein control for $p< 2$ is that $\E(|Z_t|)$ is replaced by $\E(|Z_t|^{p/2})$. Here $x\mapsto |x|^{p/2}$ is not convex when $p<2$ and Jensen's inequality does not apply.
\end{remark}

\subsection{Proof of Theorems \ref{thm:rapport-Winfty} and \ref{thm:rapport-W2}}
As said before, Theorems \ref{thm:rapport-Winfty} and \ref{thm:rapport-W2} are deduced from Theorems \ref{thm:Winfty} and \ref{thm:W2} and the use of the homogeneous dilations.   

 For a fixed time $T>0$ and $p\in (0,\infty]$,  we introduce $C_{T,p}$ to be the constant:
 \[
C_{T,p}:= \sup_{a\neq a'\in\He}   \inf_{\mathcal {A}_T^{(a,a')}}    \sup_{0 \leq t \leq T}  \frac{ \E[ d_{\He}^p(\B_t^a, \B_t^{a'}) ]^{\frac{1}{p}}  } { d_{\He} (a,a')}\in [0,+\infty];
\]
where, as before in Remarks \ref{rem:rapport-Winfty} and \ref{rem:rapport-W2}, $ \mathcal {A}_T^{(a,a')}$ denotes the set of  co-adpated couplings of $(\B_t^a)_{0\leq t\leq T}$ and  $(\B_t^{a'})_{0\leq t\leq T}$, starting respectively in $a$ and $a'$. When $p=+\infty$, the numerator is  $\mathrm{essup}_{(\Omega,\P)}d_\He(\B_t^a,\B_t^{a'})$. As noticed in these remarks we aim at proving $C_{T,p}=+\infty$ for $p\geq 2$.

The first key point is to show that, using dilations, this constant does not depend on $T$. For this, let $S>0$ be another fixed time.
The point is that if $(\B_t^a, \B_t^{a'} )_{0\leq t\leq T}$ is a co-adapted coupling of two Heisenberg Brownian motions on $[0,T]$ starting respectively in $a$ and $a'$;
 setting for $b\in\{a,a'\}$:
\[
\tilde \B_s ^{\tilde b} : = \dil_{\sqrt\frac{S}{T} }\left( \B_{\frac{sT}{S}}^{ b}\right), 
 \textrm{ for } 0\leq s \leq S; 
\]
then, 
$
(\tilde \B_s^{\tilde a} , \tilde \B_s ^{\tilde{a'}})_{0\leq s\leq S}
$
is a co-adapted  coupling of two Heisenberg  Brownian motions on $[0,S]$ starting respectively in $\tilde a=\dil_{\sqrt \frac{S}{T}} (a) $ and $\tilde{a'}= \dil_{\sqrt \frac{S}{T}} (a')$,
Moreover,
\[
d_\He(\tilde \B_s ^{\tilde a}, \tilde \B_s ^{\tilde {a'}})
= \sqrt\frac{S}{T} d_\He \left( \B_{\frac{sT}{S}}^{a},  \B_{\frac{sT}{S}}^{a'}\right) \]
and 
\[ d_\He( \tilde a, \tilde {a'})= \sqrt \frac{S}{T}  d_\He (a,a').\]
This easily gives $C_{S,p} \leq C_{T,p}$ and by symmetry of $S$ and $T$: $ C_{S,p} = C_{T,p}$.

We can now turn to the proof of  Theorems \ref{thm:rapport-Winfty} and \ref{thm:rapport-W2}. Since the proofs are similar and the case $p=+\infty$ is easier, we only treat the case $p=2$.  
\begin{proof}[Proof of Theorem \ref{thm:rapport-W2}.]
Suppose by contradiction that $C_{T_0,2} <+\infty$ for some $T_0>0$. The above discussion implies that   $C_{T,2} <+\infty$ for each fixed time $T>0$.
Let $a=(0,0,0)$ and $a'=(x',0,0)$ with $x'\neq 0 $. Thus,  there exists a constant  $C$ (one can take $C= C_{T_0,2} \, d_\He(a,a')$) such that for each time  $T_0>0$,   there is a co-adapted coupling  satisfying $ \E[d_\He^2( B_t ^a, B_t^{a'})] \leq C$ for $t\in [0, T_0]$.

Now with the same notation as in  the proof of Theorem \ref{thm:W2} and denoting $q_{T_0}= \P(\forall 0\leq s\leq T_0,  R_s\geq \frac{R_0}{2} )$; one has $q_{T_0} \geq q$ 
and as before, there exists $h$ (independent of $T_0$) such that for all $0\leq t\leq T_0 -h $, 
\[
\E[|M_{t+h}-M_t|] \geq 10 C^2.
\]
Of course this gives: $\E[|A_{t+h}-A_t|]\geq 8C^2$ for every $0\leq t\leq T_0 -h $. Therefore equation \eqref{eq:contradiction} still holds if $nh\leq T_0$. Since the constants $C$ and $h$ are independent of $T_0$, letting $T_0$  and $n$ tend to infinity gives the contradiction. Thus $C_{T_0,2} =+\infty$.
\end{proof}

\section{Coupling by reflection}\label{sec:refl}
In this section, we study precisely the coupling by reflection.
We recall that $(\B_t)_{t\geq 0}$ and $(\B'_t)_{t\geq 0}$ are two Heisenberg Brownian motions coupled by  reflection if and only if
their horizontal parts $(B_t)_{t\geq 0}$ and $(B'_t)_{t\geq 0}$ are two Brownian motions on $\R^2$ coupled by reflection. This means that the coupling matrices are  given by 
$K^{1,1}=-1$, $K^{2,2}=1$, $K^{1,2}=K^{2,1}=0$ for $t< \tau$ and by the matrix $J=\mathrm{Id}_2$ for $t\geq\tau$ where $\tau= \inf \{s\geq 0:\, R_s =0\}$ is the hitting time of $0$ for $(R_t)_{t\geq 0}$.
We recall 
\[
R_t=R_0 + 2 C_{t\wedge \tau}\quad \textrm{ and }\quad Z_t= Z_0+  \int_0^{t\wedge \tau} (R_0 + 2 C_{s\wedge\tau} )d \tilde C_s 
\]
where $(C_s)_s$ and   $(\tilde C_s)_s$ are two independent Brownian motions (starting in 0) and with $\tau= \inf\{ s\geq 0:\, C_s=-R_0/2\}$. 

For simplicity, in the following we only consider the case $R_0>0$ and $Z_0=0$.
\begin{prop}\label{pro:assymp}
With the above notation, assume $R_0>0$ and $Z_0=0$,
 Let $p>0$, then  there exists some constants $C_p, C_p', C_p''>0$ such that 
\[
\E[R_t]= R_0
\]
and 
\[
\left\{ \begin{array}{l l l}
\E[|Z_t|^p]& \sim_{t\to \infty} C_p R_0 t^{p-\frac{1}{2}} & \textrm{ if } p > \frac{1}{2}\\
\E[|Z_t|^p] &\sim_{t\to \infty} C_p' R_0 \ln t &\textrm{ if } p =\frac{1}{2} \\
\E[|Z_t|^p] &\to_{t\to+\infty}  C_p'' R_0^{2p}  & \textrm{ if } 0 < p <\frac{1}{2}.
   \end{array} \right.
\]
\end{prop}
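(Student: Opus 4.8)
The plan is to analyze the reflection coupling by exploiting the explicit representation
\[
R_t = R_0 + 2C_{t\wedge\tau}, \qquad Z_t = \int_0^{t\wedge\tau}(R_0+2C_{s\wedge\tau})\,d\tilde C_s,
\]
where $C$ and $\tilde C$ are independent standard Brownian motions and $\tau=\inf\{s:\,C_s=-R_0/2\}$. The statement $\E[R_t]=R_0$ is immediate since $(R_{t\wedge\tau})_t$ is a stopped martingale starting at $R_0$, and because $R_t$ is frozen at $0$ after $\tau$ (the coupling becomes synchronous), $\E[R_t]=\E[R_{t\wedge\tau}]=R_0$. The substance of the proposition is the asymptotic behaviour of $\E[|Z_t|^p]$, and the natural strategy is to condition on the path of $C$ (equivalently on $\tau$ and on the occupation of $R$): conditionally on $\mathcal F^C_\infty$, the process $Z_t$ is a time-changed Brownian motion, namely $Z_t = \tilde\beta_{V_t}$ for a Brownian motion $\tilde\beta$ independent of $C$, with random clock
\[
V_t = \langle Z\rangle_t = \int_0^{t\wedge\tau}(R_0+2C_s)^2\,ds = \int_0^{t\wedge\tau}R_s^2\,ds.
\]
Thus $Z_t$ conditionally has law $\mathcal N(0,V_t)$, and $\E[|Z_t|^p]=\E[V_t^{p/2}]\cdot\E|G|^p$ with $G$ standard normal, so everything reduces to understanding the moments of the additive functional $V_t=\int_0^{t\wedge\tau}R_s^2\,ds$.

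First I would understand $V_\infty=\int_0^\tau R_s^2\,ds$, the total accumulated clock, which is finite almost surely because $\tau<\infty$ a.s. for the reflected/absorbed Brownian motion $R$. For $0<p<1/2$ the exponent $p/2<1/4$ is small and I expect $\E[V_t^{p/2}]\to \E[V_\infty^{p/2}]<\infty$ by dominated/monotone convergence, giving a finite limit; the scaling $R_0^{2p}$ then comes from the Brownian scaling $R_s = R_0\,\rho_{s/R_0^2}$ (with $\rho$ the process started at $1$), under which $V_\infty$ scales like $R_0^4\cdot(\text{universal variable})$ and hence $V_\infty^{p/2}$ like $R_0^{2p}$. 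The regimes $p\ge 1/2$ are driven instead by the behaviour of $V_t$ for large $t$ \emph{before} $\tau$, or more precisely by the event that $R$ has not yet been absorbed: here the clock keeps growing, and I would show that on $\{\tau>t\}$ one has $V_t\approx \int_0^t R_s^2\,ds$ growing, while the tail $\P(\tau>t)\sim c\,R_0/\sqrt t$ (the standard hitting-time tail for Brownian motion reaching $-R_0/2$, giving the linear factor $R_0$). Balancing the growth of $V_t$ against this decaying probability should produce the three regimes: for $p>1/2$ the dominant contribution yields $t^{p-1/2}$, at $p=1/2$ a logarithmic correction appears from the borderline integrability, and for $p<1/2$ the early-time (finite) contribution wins.

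Concretely, to extract the large-$t$ asymptotics I would decompose $\E[|Z_t|^p]=\E|G|^p\,\E[V_t^{p/2}]$ and split according to $\{\tau\le t\}$ and $\{\tau>t\}$. On $\{\tau\le t\}$ the clock is frozen at $V_\tau=V_\infty$, contributing $\E[V_\infty^{p/2}\mathbf 1_{\tau\le t}]\to \E[V_\infty^{p/2}]$; this contribution is finite exactly when $\E[V_\infty^{p/2}]<\infty$, which I expect to hold for $p<1/2$ and to diverge for $p\ge 1/2$ (the tail of $\tau$ makes $V_\infty$ heavy-tailed, with $\P(V_\infty>v)\sim c'R_0 v^{-1/4}$ by scaling, so $\E[V_\infty^{p/2}]<\infty$ iff $p/2<1/4$). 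On $\{\tau>t\}$ I would use the representation $R_s=R_0+2C_s$ together with the law of Brownian motion conditioned to stay above $-R_0/2$ up to time $t$ to estimate $\E[V_t^{p/2}\mathbf 1_{\tau>t}]$, which should grow like $t^{p-1/2}$ (times $R_0$) for $p\ge 1/2$ after combining the $t^{2\cdot p/2}=t^p$ growth of $V_t\sim\int_0^t R_s^2\,ds$ with the $t^{-1/2}$ hitting-time probability. The main obstacle will be making these tail estimates precise — in particular computing the exact constants $C_p,C_p',C_p''$ and justifying the sharp equivalences (not merely the orders) for the functional $V_\infty$ and for the conditioned functional on $\{\tau>t\}$. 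I expect this to require a careful scaling argument via $R_s=R_0\rho_{s/R_0^2}$ to isolate the $R_0$-dependence, followed by a Tauberian or direct Laplace-type computation on the universal variable to pin down the constants and the logarithmic borderline at $p=1/2$.
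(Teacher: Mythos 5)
Your outline follows exactly the route of the paper's proof: the time-change (Dambis--Dubins--Schwarz) representation $Z_t=\tilde\beta_{V_t}$ with $V_t=\int_0^{t\wedge\tau}R_s^2\,ds$, the factorization $\E[|Z_t|^p]=\E[|G|^p]\,\E[V_t^{p/2}]$ by independence of the two driving Brownian motions, the split of $\E[V_t^{p/2}]$ along $\{\tau\le t\}$ and $\{\tau>t\}$ (the paper writes this as an integral against the explicit hitting density $f_\tau(u)=\frac{R_0/2}{\sqrt{2\pi}\,u^{3/2}}e^{-R_0^2/(4u)}$, split at $u=t$, giving its $h_1$ and $h_2$), the tail $\P(\tau>t)\sim c R_0 t^{-1/2}$ producing the three regimes, and the dilation identity $\E_{R_0}[|Z_t|^p]=R_0^{2p}\,\E_{\{R_0=1\}}[|Z_{t/R_0^2}|^p]$ to isolate the $R_0$-dependence. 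All the orders of growth you assert are correct, as is your criterion that $\E[V_\infty^{p/2}]<\infty$ exactly when $p<1/2$, and your argument for $\E[R_t]=R_0$ matches what the paper takes for granted.

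The genuine gap is the one you flag yourself: the proposition asserts sharp equivalences with constants, and your proposal stops at orders of magnitude. What the paper adds beyond your outline is the identification of the limiting object and two lemmas that upgrade convergence in law to convergence of expectations. Conditionally on $\tau=u$, the rescaled process $\tilde R_\lambda=R_{\tau\lambda}/(2\sqrt\tau)$, $\lambda\in[0,1]$, converges in law as $u\to\infty$ to the normalized Brownian excursion $(X_s)_{s\in[0,1]}$; the constants $C_p,C_p',C_p''$ are expressed through $\E[(\int_0^\alpha X_s^2\,ds)^{p/2}]$. To get convergence of the conditional moments $\E[(\int_0^1\tilde R_\lambda^2\,d\lambda)^{p/2}\mid\tau=u]$ the paper proves uniform integrability via an explicit formula (Borodin--Salminen) for the supremum of a Brownian motion conditioned on its hitting time, yielding uniformly bounded exponential moments (Lemma \ref{lem:unif-int}); and to pass to the limit inside the $\{\tau>t\}$ term -- your ``conditioned to survive'' contribution, after the substitution $u=tv$ -- it dominates $\tilde R$ pathwise by a Bessel-3 process started at the same point (Lemma \ref{lem:bessel}), which legitimizes dominated convergence over $v\in[1,\infty)$. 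Your concluding gesture toward ``a Tauberian or direct Laplace-type computation on the universal variable'' does not substitute for these two steps: convergence in law alone pins down neither the constants nor the uniformity in $u$ (respectively in $v$) needed to interchange $t\to\infty$ with the integrals, and that interchange is precisely where the $t^{p-1/2}$, $\ln t$, and constant regimes are separated. So your plan is the paper's plan; what is missing is exactly the pair of lemmas that make the sharp asymptotics legitimate.
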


\begin{remark}
In particular for $0<\alpha<1$, the upper bound
\[
\sup_{t\geq 0} \E\left[d_\He(\B_t, \B'_t)^\alpha\right] <+\infty
\]
is satisfied by the coupling by reflection. This is obtained by recalling that $d_\He(\B_t, \B'_t)$ is equivalent to the homogeneous distance $\sqrt{R_t^2 + |Z_t|}$ using Proposition \ref{pro:assymp} for $p=\alpha/2$, $(a+b)^p\leq a^p+b^p$, and Jensen's inequality $[\E(R_t)^\alpha]\leq [\E(R_t)]^\alpha$. \end{remark}

\begin{proof}
We assume $R_0=1$. Let $t>0$  be fixed. By the Dambins-Dunford-Schwarz theorem, $Z$ is  a changed time Brownian motion:
\[
Z_t= W_{T(t)} \textrm{ with } T(t)=\int_0^t R_s^2 ds
\] 
with $W$ a Brownian motion independent of $(R_t)_{t\geq 0}$. 
Set $\tau =\inf\{s\geq 0:\, R_s=0\}$. As $( R_s/2)_{s\geq 0}$ is a Brownian motion starting in $R_0/2$ and stopped in 0, it is known that $\tau$ is almost surely finite and that its  density  $f_\tau$ is given  by 

\begin{align}\label{densite}
f_\tau(u)=\frac{R_0/2}{ \sqrt{2\pi} u^{3/2}} e^{-\frac{R_0^2}{4u}}, \; u\geq 0.
\end{align}

Using $\tau$, we compute
\begin{align} \label{decomp}
E\left[|Z_t|^{p}\right]&=\E(|W_{T(t)}|^{p})\\ \notag
 &=\int_0^{+\infty} \E(|W_{T(t)}|^{p}|\tau=u)f_\tau(u)d u \\ \notag
&=\underbrace{\int_0^{t} \E(|W_{T(t)}|^{p}|\tau=u)f_\tau(u)d u}_{h_1(t)}+ \underbrace{\int_t^{+\infty} \E(|W_{T(t)}|^{p}|\tau=u)f_\tau(u)d u}_{h_2(t)}. \notag
\end{align}
In the last line, we split the integral between the trajectories of $R$ that have hit $0$ before $t$ and those which will hit 0 after $t$.

Let us estimate $h_1(t)$, the first integral in the decomposition \eqref{decomp}. Hence we set $u\leq t$. Since $W$ and $R$ are  independent, with $c_p=\E(|W_1|^{p})$, one has:
\begin{align}  \label{transfo}
 \E(|W_{T(t)}|^{p}|\tau=u)&= c_p  \E(|T(t)|^{p/2}|\tau=u)\\ \notag
                                            &= c_p \E\left( \left(\int_0^u R_s^2 ds\right) ^{p/2}  |\tau=u \right)\\ \notag
&=  c_p 2^p  u^p \E\left[ \left(\int_0^1 \tilde R_\lambda^2 d\lambda \right) ^{p/2}  |\tau=u \right]\notag
\end{align}
where we have introduced the normalised 
process   $(\tilde R_\lambda)_{\lambda\in [0,1]}$ (defined almost surely, since the hitting time $\tau$ is almost surely finite) in such a way it hits 0 at time 1:
\[
\tilde R_\lambda= \frac{1}{2 \sqrt \tau} R_{\tau \lambda},\quad \lambda \in [0,1].
\]
Note that $\tilde R_0= \frac{R_0}{2\sqrt \tau}$.

It is then well-known that, conditioned on $\tau=u$, the entire process $(\tilde R_\lambda)_{\lambda \in [0,1]} 
$ converges in law  when $u\to \infty$ to a normal positive Brownian excursion  $(X_s)_{s\in[0,1]}$.
Moreover, as proven in Lemma \ref{lem:unif-int}, when $u\to \infty$,

\begin{align}\label{stab}
 \E\left[ \left(\int_0^1 \tilde R_\lambda^2 d\lambda \right) ^{p/2}  |\tau=u \right] 
\to \E\left[ \left(\int_0^1 X_s^2 ds\right) ^{p/2}  \right].
\end{align}


\

\

Finally with \eqref{transfo} denoting the limit in \eqref{stab} by $E_p$, the first integral in \eqref{decomp} satisfies the following equivalence:
\begin{align*}
h_1(t)=\int_0^{t} \E(|W_{T(t)}|^{p}|\tau=u)f_\tau(u)d u & \sim_{t\to \infty}2^p  \, c_p \, E_p \int_0^{t} u^p f_\tau(u) du
\end{align*}
From the density estimate of $f_\tau$ in \eqref{densite} we have $u^pf_\tau(u)\sim_{+\infty} \frac{R_0}{2 \sqrt{2\pi}}u^{p-3/2}$. Therefore:
\begin{itemize}
\item If $p>1/2$, the function $h_1(t)$ is equivalent to $\frac{ 2^{p-3/2}\,  R_0 \, c_p E_p}{ (p-1/2) \sqrt {\pi} \,  u^{3/2}}t^{p-1/2}$ at $+\infty$,
\item if $p=1/2$, it is equivalent to $\frac{R_0c_p E_p}{2 \sqrt {\pi} \,  u^{3/2}}\ln t$,
\item if $0<p<1/2$, it converges to a positive constant.
\end{itemize}

\

We now turn to $h_2$. 
As before,
 \begin{align*}
 h_2(t)&=\int_t^{+\infty} \E(|W_{T(t)}|^{p}|\tau=u)f_\tau(u)d u\\
 &=  c_p \int_t^{+\infty}\E\left( \left(\int_0^t R_s^2 ds\right) ^{p/2}  |\tau=u \right)f_\tau(u)d u\\
&=2^p \,   c_p \,     \int_t^{+\infty}  u^p \E\left[ \left(\int_0^{ \frac{t}{u}} \tilde R_\lambda^2 d\lambda \right) ^{p/2}  |\tau=u \right] f_\tau(u)d u\\
&=2^p\,  c_p \,   t^{p+1} \,  \int_1^{+\infty}  v^p \E\left[ \left(\int_0^{ \frac{1}{v}} \tilde R_\lambda^2 d\lambda \right) ^{p/2}  |\tau= tv \right] f_\tau(tv)d v\\
&=   \frac{ 2^{p-3/2}\, c_p \, R_0 \, t^{p-3/2}}{ \sqrt{\pi}} \int_1^{+\infty}  v^{p-3/2} \E\left[ \left(\int_0^{ \frac{1}{v}} \tilde R_\lambda^2 d\lambda \right) ^{p/2}  |\tau= tv \right] e^{-\frac{R_0^2}{tv}} dv\\
\end{align*}
where, as above, $s=u\lambda$ and $\tilde{R}_{\lambda}=R_{\tau\lambda}/\sqrt{\tau}$ and where we set the change of variable $u=tv$ in the next to last line.

Now, Lemma \ref{lem:unif-int} and the dominated convergence, which is completely justified  by Lemma \ref{lem:bessel}, give as $t\to+\infty$,
\begin{align*}
  \int_1^{+\infty}  v^{p-3/2} \E\left[ \left(\int_0^{ \frac{1}{v}} \tilde R_\lambda^2 d\lambda \right) ^{p/2}  |\tau= tv \right] e^{-\frac{R_0^2}{4tv}} dv
\to   \int_1^{+\infty}  v^{p-3/2} \E\left[ \left(\int_0^{ \frac{1}{v}}X_s^2 ds \right) ^{p/2} \right] d v.
\end{align*}
As a consequence, denoting by $I_p$  the last integral,
\[
h_2(t) \sim_{t\to \infty} \frac{2^{p-3/2}\, c_p  \,  R_0}{\sqrt {\pi}}  t^{p-1/2} I_p.
\]
This with the treatment of $h_1$ above gives the complete result in case $R_0=1$. Next, if $R_0>0$ 
one infers  
$\E_{R_0} [|Z_t|^p ]=R_0^{2p}  \;   \E_{\{R_0=1\} }[| Z_{t/R_0^2} |^p]$
from the classical dilations of Subsection \ref{sec:H}, so that the general case follows.
\end{proof}

The two next lemmas complete the proof of Proposition \ref{pro:assymp}.
\begin{lemme}\label{lem:unif-int}
Let $( \tilde R_t)_{t\in[0,1]}$ be a Brownian motion starting in $r_0>0$ conditioned to hit 0 for the first time at time 1.
Let $\alpha\in [0,1]$ and $p$ be positive. As $r_0\to 0$, 

\begin{align}\label{stab2}
 \E\left[ \left(\int_0^\alpha \tilde R_\lambda^2 d\lambda \right) ^{p/2}\right] 
\to \E\left[ \left(\int_0^\alpha X_s^2 ds\right) ^{p/2}  \right]
\end{align}
where $( X_s)_{s\in [0,1]}$  is a Brownian excursion.
\end{lemme}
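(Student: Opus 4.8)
The plan is to derive the convergence of the $\tfrac p2$-moments from the weak convergence of the whole conditioned path to the excursion, upgraded by a uniform integrability argument. Denote by $\tilde R^{(r_0)}$ the Brownian motion started at $r_0>0$ and conditioned to hit $0$ for the first time exactly at time $1$, regarded as a random element of $C([0,1])$, and set $F_\alpha(\omega)=\big(\int_0^\alpha \omega_\lambda^2\,d\lambda\big)^{p/2}$. First I would invoke the classical fact that $\tilde R^{(r_0)}\Rightarrow X$ in $C([0,1])$ as $r_0\to 0$, where $X$ is the normalised Brownian excursion; this is exactly the weak convergence already used (and called well known) in the proof of Proposition~\ref{pro:assymp}, and it identifies $X$ with the three-dimensional Bessel bridge from $0$ to $0$. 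The functional $\omega\mapsto\int_0^\alpha\omega_\lambda^2\,d\lambda$ is continuous for the uniform norm on $C([0,1])$, and $x\mapsto x^{p/2}$ is continuous on $[0,\infty)$, so $F_\alpha$ is continuous and the continuous mapping theorem gives $F_\alpha(\tilde R^{(r_0)})\Rightarrow F_\alpha(X)$.

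Weak convergence alone does not yield convergence of the expectations, so the core of the argument is to show that the family $\big(F_\alpha(\tilde R^{(r_0)})\big)_{r_0\in(0,1]}$ is uniformly integrable as $r_0\to 0$; combined with the weak convergence this gives $\E[F_\alpha(\tilde R^{(r_0)})]\to\E[F_\alpha(X)]$, which is \eqref{stab2}. Since $\alpha\le 1$ I would dominate $F_\alpha(\tilde R^{(r_0)})\le U_{r_0}^{\,p}$ with $U_{r_0}:=\sup_{\lambda\in[0,1]}\tilde R^{(r_0)}_\lambda$, so that it suffices to bound $\E\big[U_{r_0}^{\,q}\big]$ uniformly in $r_0\in(0,1]$ for some $q>p$. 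As $p$ is arbitrary, the clean target is a uniform Gaussian tail
\[
\P\big(U_{r_0}>M\big)\le C\,e^{-cM^2}\qquad(M\ge1,\ r_0\in(0,1]),
\]
from which all moments of $U_{r_0}$ are uniformly bounded and the required uniform integrability follows at once. I stress that the whole argument is uniform in $\alpha\in[0,1]$, which is what allows both $h_1$ (with $\alpha=1$) and $h_2$ (with $\alpha=1/v$) in the proof of Proposition~\ref{pro:assymp} to be treated by the same statement.

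It remains to prove the uniform tail bound, which I expect to be the main obstacle. Writing $\P(U_{r_0}>M)=\P(\max_{[0,\tau]}B\ge M\mid \tau=1)$ for a Brownian motion $B$ started at $r_0$ with first hitting time $\tau$ of $0$, one can compute it from the explicit joint law of $(\max_{[0,\tau]}B,\tau)$, which is classically known by the reflection principle as an image series: it is the relative deficit, with respect to the free first-passage density $f_\tau(1)=\frac{r_0}{\sqrt{2\pi}}e^{-r_0^2/2}$ of \eqref{densite}, of the two-barrier first-passage density of paths confined to stay below $M$. The delicate point is that the normalising density $f_\tau(1)$ is of order $r_0$ and tends to $0$, so one must check that the large-deviation cost $e^{-cM^2}$ of forcing the excursion above height $M$ is not swamped by this vanishing normalisation; inspecting the image series shows that the $r_0$-factors cancel and the ratio is $\le Ce^{-cM^2}$ uniformly in $r_0\in(0,1]$. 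A conceptually cleaner alternative, which I would use to avoid the messy series, is to reverse time: $(\tilde R^{(r_0)}_{1-s})_{s\in[0,1]}$ is a three-dimensional Bessel bridge from $0$ to $r_0$ on $[0,1]$, and such a bridge is, uniformly for $r_0\in(0,1]$, stochastically comparable to a three-dimensional Bessel process on $[0,1]$, whose supremum has a universal sub-Gaussian tail. Either route supplies the uniform tail estimate, and the three ingredients---weak convergence, continuity of $F_\alpha$, and uniform integrability---then combine to prove the lemma.
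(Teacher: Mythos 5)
Your proposal is correct and takes essentially the same approach as the paper: weak convergence of the conditioned path to the excursion, upgraded to convergence of moments by uniform integrability, which is obtained by dominating the integral functional by the supremum of the conditioned process and proving a sub-Gaussian tail bound uniform in $r_0\in(0,1]$. Your route (a) for that tail bound is exactly the paper's computation via the explicit image series (Borodin--Salminen formula), and your alternative route (b) is in substance the $3$-Bessel comparison that the paper itself employs in Lemma \ref{lem:bessel}.
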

\begin{proof} The process $ (\tilde R_t)_{t\in[0,1]}$ converges in law to the Brownian excursion $( X_s)_{s\in [0,1]}$.
To obtain the convergence of the moments of $\int_0^\alpha \tilde R_\lambda^2 d\lambda$, we use a uniform integrability property. Let 0$ \leq \alpha\leq 1$. We bound 
\[
\P\left(  \left(\int_0^\alpha \tilde R_\lambda^2 d\lambda \right) \geq y |\,\tau=u\right) \leq \P\left( \sup_{0\leq s\leq 1} W_s \geq \sqrt y |\, T_0= 1\right)
\]
where $(W_s)_{s\geq 0}$ is a  Brownian motion starting in $r_0$ and $T_0$ its hitting time of $0$.
Next, by \cite[Formula 2.1.4 (1) p.198]{BoSa_second}, still for $W$ starting in $r_0$, we have for every $t>0$
\begin{align*}
\P\left( \sup_{0\leq s\leq T_0} W_s <y |\, T_0= t\right)&= \sum_{k=-\infty} ^{+\infty} \frac{( r_0 + 2ky)}{\sqrt {2\pi} t^{3/2}} \exp\left( -  \frac{( r_0 + 2ky)^2}{2t} \right)   \frac{\sqrt {2\pi} t^{3/2}}{r_0}  \exp\left(   \frac{ r_0^2}{2t} \right),
\end{align*}
In particular, reorganising the terms,
\begin{align*}
\P \left( \sup_{0\leq s\leq 1} W_s  <y |\, T_0= 1\right)&
&=1 - 2 \sum_{k=1} ^{+\infty} \left( 4k^2y^2 \frac{\sinh(2kyr_0)}{2kyr_0}-\cosh(2kyr_0)   \right) \exp\left(- 2k^2 y^2\right),
\end{align*}
and, since  for $u\geq 0$, $\frac{\sinh u}{u} \leq e^{u}$, uniformly on  $0<r_0\leq 1$ and $y\geq 1$, 
 \begin{align*}
\P \left( \sup_{0\leq s\leq 1} W_s \geq y |\,T_0= 1\right)
&\leq 8\sum_{k=1} ^{+\infty}  k^2y^2 \exp(2ky)\exp\left(- 2k^2 y^2\right) \leq a \exp(-by^2)
\end{align*}
for some $a,b>0$.

Thus, for all $0<r_0\leq 1$, the random variables  $ \left(\int_0^\alpha \tilde R_\lambda^2 d\lambda \right)_{r_0\leq 1}$ admit some uniformly bounded exponential moment. As a consequence, the corresponding $ \left(\int_0^\alpha \tilde R_\lambda^2 d\lambda \right)^\frac{p}{2}$ are uniformly integrable and the desired convergence follows. 
\end{proof}

\begin{lemme}\label{lem:bessel}
With the above notation,
there exists a coupling of $( \tilde R_t)_{t\in[0,1]}$  and of a 3-Bessel process $(V_t)_{t\in [0,1]} $ on the same probability space such that both start in $r_0\geq 0$ and such  almost surely:
\[
\tilde R_t \leq V_ t , \, \textrm{for all } 0\leq t\leq 1.
\]
In particular, for $R_0$ fixed and $p>0$ there exists a constant $D_p>0$ such that for all $v\geq 1$ and $r_0 \leq \frac{R_0}{\sqrt {v}}$ it holds 
\[
v^p \E\left[ \left(\int_0^{ \frac{1}{v}} \tilde R_\lambda^2 d\lambda \right) ^{p/2}\right] \leq D_p
\]
where the process  $\tilde R_\lambda$ starts in $r_0$.
\end{lemme}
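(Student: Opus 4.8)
The plan is to realise $\tilde R$ as a Doob $h$-transform of Brownian motion absorbed at $0$ and then to compare it, pathwise, with a $3$-Bessel process driven by the same noise. First I would identify the law of $(\tilde R_t)_{t\in[0,1)}$: a Brownian motion started at $r_0>0$ and absorbed at $0$, conditioned so that its absorption time equals $1$, is the $h$-transform associated with the space-time harmonic function $h(t,x)=f_{1-t}(x)$, where $f_s(x)=\frac{x}{\sqrt{2\pi s^3}}e^{-x^2/(2s)}$ is the first-passage density of $0$ (starting from $x$ at time $t$ the remaining time to absorption is $1-t$). Computing $\partial_x\log h(t,x)=\frac1x-\frac{x}{1-t}$ shows that $\tilde R$ solves
\[
d\tilde R_t=d\beta_t+\Bigl(\frac{1}{\tilde R_t}-\frac{\tilde R_t}{1-t}\Bigr)\,dt,\qquad \tilde R_0=r_0,
\]
on $[0,1)$, for some Brownian motion $\beta$. (This is the $3$-Bessel bridge from $r_0$ to $0$, and the case $r_0=0$ --- the Brownian excursion --- is recovered by continuity.)

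Next I would let $(V_t)$ be the $3$-Bessel process driven by the same $\beta$, i.e. the solution of $dV_t=d\beta_t+\frac{1}{V_t}\,dt$ with $V_0=r_0$. Since the two equations share the diffusion coefficient $1$ and the same driving noise, start from the same point, and since the drift of $\tilde R$ satisfies $\frac1x-\frac{x}{1-t}\leq\frac1x$ for all $x>0$ and $t\in[0,1)$, the standard comparison theorem for one-dimensional stochastic differential equations yields $\tilde R_t\leq V_t$ for all $t\in[0,1)$ almost surely, hence on $[0,1]$ by continuity. This already gives the coupling asserted in the first part. The singularities of the drifts at $x=0$ and at $t=1$ cause no real difficulty because both processes stay strictly positive on $[0,1)$, so the comparison can be run up to the hitting time of any level $\eps>0$ and the bound passed to the limit; making this localisation clean is the only point requiring a little care, and I expect it to be the main (mild) technical obstacle.

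Finally, for the quantitative bound I would combine this domination with Bessel scaling. From $\tilde R_\lambda\leq V_\lambda$ one gets $\int_0^{1/v}\tilde R_\lambda^2\,d\lambda\leq\int_0^{1/v}V_\lambda^2\,d\lambda$. The Brownian scaling of Bessel processes shows that $U_s:=\sqrt v\,V_{s/v}$ is again a $3$-Bessel process, now started at $\sqrt v\,r_0\leq R_0$, and the change of variables $\lambda=s/v$ gives $\int_0^{1/v}V_\lambda^2\,d\lambda=v^{-2}\int_0^1 U_s^2\,ds$, whence
\[
v^p\,\E\Bigl[\Bigl(\int_0^{1/v}\tilde R_\lambda^2\,d\lambda\Bigr)^{p/2}\Bigr]\leq \E\Bigl[\Bigl(\int_0^1 U_s^2\,ds\Bigr)^{p/2}\Bigr].
\]
Dominating $U$ (started at $\sqrt v\,r_0\leq R_0$) by a $3$-Bessel process $\bar U$ started at $R_0$ --- again by the comparison theorem, or via the representation of a $3$-Bessel process as the modulus of a $3$-dimensional Brownian motion $\beta^{(3)}$, which also yields $\bar U_s\leq R_0+\sup_{[0,1]}|\beta^{(3)}|$ --- bounds the right-hand side by $\E[(\int_0^1\bar U_s^2\,ds)^{p/2}]=:D_p$. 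This is finite, since $\int_0^1\bar U_s^2\,ds\leq (R_0+\sup_{[0,1]}|\beta^{(3)}|)^2$ has moments of every order, and $D_p$ depends only on $p$ and $R_0$, as required.
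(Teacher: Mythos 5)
Your proposal is correct and follows essentially the same route as the paper: identify $\tilde R$ as the $3$-Bessel bridge solving $d\tilde R_t=dW_t+\bigl(\tfrac{1}{\tilde R_t}-\tfrac{\tilde R_t}{1-t}\bigr)dt$, dominate it pathwise by the $3$-Bessel process driven by the same Brownian motion (the drift comparison $\tfrac1x-\tfrac{x}{1-t}\leq\tfrac1x$), and conclude via the Bessel scaling property $(\sqrt{v}\,V_{s/v})_s\overset{\mathrm{Law}}{=}$ a $3$-Bessel process started at $\sqrt{v}\,r_0\leq R_0$. Your write-up actually supplies details the paper leaves implicit (the Doob $h$-transform derivation of the SDE, the localisation in the comparison argument, and the uniform-in-starting-point bound via the modulus of a $3$-dimensional Brownian motion), but the underlying argument is the same.
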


\begin{proof}
The process $ \tilde R_t$ can be thought as a Bessel bridge (see e.g. \cite[Chapter XIII] {MR1725357}) and thus satisfies 
\[
d \tilde R_t = dW_t +  \left(\frac{1}{\tilde R_t} - \frac{\tilde R_t}{1-t} \right) dt
\]
for some Brownian motion $(W_t)_{t\in[0,1]}$.
The coupling is obtained by considering the same Brownian motion in the stochastic differential equation defining $(V_t)_{t \in [0,1]}$:
\[
d V_t = dW_t + \frac{1}{ V_t} dt
\]
The other conclusion follows since the 3-Bessel process shares the same scaling property as the Brownian motion: $(V_{\lambda t}/\sqrt{\lambda})_t$ has the same law as the 3-Bessel process starting in $r_0/\sqrt{\lambda}$. 
\end{proof}

\section{The static coupling, a transport problem}\label{sec:bonus}
In this section, we turn to the proof of Theorem \ref{thm:static}. Recall that it gives a direct proof of the following $L^1$-Wasserstein control:
There exists $C>0$ such that for every $t\geq 0$, and every $a,a'\in \He$,
\[\mathcal{W}_1(\mu^a_t,\mu^{a'}_t) \leq Cd_\He(a,a')\]
where $\mu^a_t=\law(\B^a_t)$ and $\mu^{a'}_t=\law(\B^{a'}_t)$.

\begin{proof}[Proof of Theorem \ref{thm:static} ]
\emph{Reduction of the problem:} Let us first see how, using the symmetries of $\He$ presented in Subsection \ref{sec:H}, the proof can be reduced to $t=1$, $a=(0,0,0)$ and $a'=(x',0,0)$. First, the isometries of $\He$ induced isometries for $\mathcal{W}_1$. In particular $(\trans_p)_\#$ and $(\rot_\theta)_\#$ are isometries for $\mathcal{W}_1$, for every $p\in \He$ and $\theta\in \R$, that stabilise the family $\{\mu^a_t\}_{a\in \He}$.
\[\mathcal{W}_1(\mu^a_t,\mu^{a'}_t)=\mathcal{W}_1((\trans_p)_\#\mu^a_t,(\trans_p)_\#\mu^{a'}_t)=\mathcal{W}_1(\mu^{p.a}_t,\mu^{p.a'}_t).\]
Hence, we can assume $a=(0,0,0)$. Using $\rot_\theta$ we can moreover assume $a'=(x',0,z')$. As moreover $\dil_\lambda$, defined in Subsection \ref{sec:H} satisfies $(\dil_\lambda)_\#\mu^a_t=\mu^{\dil_\lambda(a)}_{\lambda^2 t}$ we can assume $t=1$. Finally
\begin{align*}
\mathcal{W}_1(\mu^0_1,\mu^{(x',0,z')}_1)&\leq \mathcal{W}_1(\mu^0_1,\mu^{(x',0,0)}_1)+\mathcal{W}_1(\mu^{(x',0,0)}_1,\mu^{(x',0,z')}_1)\\
&\leq \mathcal{W}_1(\mu^0_1,\mu^{(x',0,0)}_1)+\mathcal{W}_1(\mu^{0}_1,\mu^{(0,0,z')}_1)\\
&\leq \mathcal{W}_1(\mu^0_1,\mu^{(x',0,0)}_1)+d_\He((0,0,0),(0,0,z')).
\end{align*}
The estimate $\mathcal{W}_1(\mu^{0}_1,\mu^{(0,0,z')}_1)\leq d_\He((0,0,0),(0,0,z'))=2\sqrt{\pi|z'|}$ comes from the fact that $\trans_{(0,0,z')}$ is not only the left-translation but also the right-translation of vector $(0,0,z')$. If $(X,Y,Z)\sim \mu^{0}_1$, then $(X,Y,Z).(0,0,z')\sim \mu^{(0,0,z')}_1$ so that
\[d_\He((X,Y,Z).(0,0,z'),(X,Y,Z))=d_\He((0,0,0),(0,0,z')).\]
If we can prove $\mathcal{W}_1(\mu^0_1,\mu^{(x',0,0)}_1)\leq C' d_\He((0,0,0),(x',0,0))=C'|x'|$ for some $C'\geq 1$, we have finally
\begin{align*}
\mathcal{W}_1(\mu^0_1,\mu^{(x',0,z')}_1)&\leq C d_\He((0,0,0),(x',0,0))+d_\He((0,0,0),(0,0,z'))\\
&=C' |x'|+2\sqrt{\pi |z'|} \leq\max( C', \sqrt{2\pi}) \sqrt{(x')^2+|z'|}\\
&\leq C d_\He((0,0,0),(x',0,z')),
\end{align*}
with $C= c\cdot \max( C', \sqrt{2\pi})$  with $c$ defined as in  \eqref{eq:dist-eq}. Finally the proof amounts to the case $t=1$, $a=(0,0,0)$ and $a'=(x',0,0)$, as we announced.

\emph{Main body of the proof:} We set 

\begin{align}
\mu&=\mu^0_1=\law(X,Y,Z)\label{ligne_un}\\
\nu&=\mu_1^{(x',0,0)}=\law\left[(x',0,0).(X,Y,Z)\right]=\law(X+x',Y,Z+(1/2)x'Y)\label{ligne_deux}\\
\tilde\mu&=\law\left[(x',0,0).(X,Y,Z).(-x',0,0)\right]=\law(X,Y,Z+x'Y) \label{ligne_trois}
\end{align}

We want to estimate $W_1(\mu,\nu)$  from above and start with
\[W_1(\mu,\nu)\leq W_1(\mu,\tilde \mu)+W_1(\tilde \mu,\nu)\]
The coupling in \eqref{ligne_deux} and \eqref{ligne_trois} yields $W_1(\tilde \mu,\nu)\leq |x'|=d_\He((0,0,0),(x',0,0))$.
The coupling suggested in \eqref{ligne_un} and \eqref{ligne_trois} yields 
\begin{align}
W_1(\mu,\tilde \mu)&\leq \E\left[\E\left[d_\He((X,Y,Z),(X,Y,Z+x'Y))|(X,Y)\right] \right]\label{restart}\\  \notag
&\leq \E\left[\E\left[d_\He((0,0,0),(0,0,x'Y))|(X,Y)\right]\right]\\
&\leq \E \left[2\sqrt{\pi |Yx'|}\right]=\sqrt{|x'|}\times[2\sqrt{\pi} \, \E(\sqrt{|Y|})] \notag
\end{align}
The order $\sqrt{|x'|}$ is such that $\sqrt{|x'|}/d_\He((x',0,0),(0,0,0))=|x'|^{-1/2}\to \infty$ as $x'$ goes to zero.

\begin{remark}
Note that the coupling in \eqref{restart} is exactly the synchronous coupling of subsection \ref{sub:synchro} evaluated at time $t=1$. The estimate from above can easily be checked with Lemma \ref{lem:SDE}.
\end{remark}

We modify the computation above just a little  based on the knowledge that the translation is not the optimal transport plan on the real line when considering costs that are increasing concave functions of the distance.

The following lemma will be in order:
\begin{lemme}\label{lem:sqrt}
If $\eta$ is a probability measure on $\R$ with rapidly decreasing and smooth density $f$, then
\[\inf_{\pi\in \Pi(\eta, (\trans^\R_t)_\#\eta)} \iint \sqrt{|y-x|}\, d \pi\leq |t| \times\left(\int |f'(x)|\sqrt{|x|}\,dx\right) \]
\end{lemme}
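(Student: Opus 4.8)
The plan is to build an explicit coupling by the layer-cake (horizontal-slicing) method and to estimate its cost slice by slice. First, since both the cost $\sqrt{|y-x|}$ and the hypothesis are invariant under $t\mapsto -t$ (replace $\eta$ by its reflection), I may assume $t>0$ and produce the factor $|t|$ at the end.

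Write $f(x)=\int_0^\infty\mathbf 1_{\{f>s\}}(x)\,ds$, so that $\eta=\int_0^\infty\lambda_{E_s}\,ds$, where $E_s=\{f>s\}$ and $\lambda_{E_s}$ denotes Lebesgue measure restricted to $E_s$; likewise $(\trans^\R_t)_\#\eta=\int_0^\infty\lambda_{E_s+t}\,ds$. I would couple the two measures level by level: for each $s$ decompose $E_s=\bigsqcup_i I_i$ into its connected components, which are open intervals $I_i=(\alpha_i,\beta_i)$, and couple $\lambda_{I_i}$ with $\lambda_{I_i+t}$ by leaving the overlap $(\alpha_i+t,\beta_i)$ in place and shifting $(\alpha_i,\alpha_i+t)$ rigidly onto $(\beta_i,\beta_i+t)$ by the length $|I_i|=\beta_i-\alpha_i$ (when $|I_i|<t$ one translates the whole interval instead). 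Superposing over $i$ and integrating over $s$ yields an admissible $\pi\in\Pi(\eta,(\trans^\R_t)_\#\eta)$ whose cost is at most $\int_0^\infty\sum_i t\sqrt{|I_i|}\,ds$: transporting a piece of length $t$ through a distance $|I_i|$ costs $t\sqrt{|I_i|}$, and in the degenerate case $|I_i|\sqrt t\le t\sqrt{|I_i|}$.

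It then remains to compare $\int_0^\infty\sum_i\sqrt{\beta_i-\alpha_i}\,ds$ with $\int_\R|f'(x)|\sqrt{|x|}\,dx$. Here I would invoke the coarea (change of variables) formula: for almost every regular level $s$ the preimage $f^{-1}(s)$ consists exactly of the endpoints $\{\alpha_i,\beta_i\}$ of the components of $E_s$, so that $\int_\R|f'(x)|\sqrt{|x|}\,dx=\int_0^\infty\sum_i\bigl(\sqrt{|\alpha_i|}+\sqrt{|\beta_i|}\bigr)\,ds$. The argument is then closed by the elementary pointwise bound $\sqrt{\beta_i-\alpha_i}\le\sqrt{|\alpha_i|+|\beta_i|}\le\sqrt{|\alpha_i|}+\sqrt{|\beta_i|}$, applied term by term at each level, which combined with the previous display gives $\mathrm{cost}(\pi)\le t\int_\R|f'|\sqrt{|x|}\,dx$.

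The genuinely delicate points, rather than this final estimate, are the two structural facts feeding it: checking that the slice-wise superposition is a bona fide element of $\Pi(\eta,(\trans^\R_t)_\#\eta)$ (a Fubini and measurable-dependence verification), and justifying the coarea identity together with the claim $f^{-1}(s)=\partial E_s$ for a.e.\ $s$ — this is exactly where the smoothness and rapid decrease of $f$ enter, via Sard's theorem and the convergence of the integrals. I expect the main obstacle to be confined to this bookkeeping; once the two representations $\mathrm{cost}(\pi)\le t\int_0^\infty\sum_i\sqrt{|I_i|}\,ds$ and $\int|f'|\sqrt{|x|}=\int_0^\infty\sum_i(\sqrt{|\alpha_i|}+\sqrt{|\beta_i|})\,ds$ are in hand, the subadditivity of the square root finishes everything at once. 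One also sees that the estimate is sharp: it is an equality, slice by slice, for a uniform density one of whose support endpoints is the origin, which is reassuring given that a cruder coupling (for instance routing all excess mass through the origin) would overshoot the claimed bound.
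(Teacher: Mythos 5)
Your argument is correct (modulo the measurable-selection bookkeeping you flag, which is standard), but it takes a genuinely different route from the paper's. The paper treats the left-hand side as the Kantorovich norm $\|\eta-(\trans^\R_t)_\#\eta\|_1$ attached to the metric $(x,y)\mapsto\sqrt{|y-x|}$, writes the difference of the two measures as a superposition of infinitesimal translations of the derivative measure, namely $\eta-(\trans^\R_t)_\#\eta=\int_0^t(\trans^\R_u)_\#\gamma\,du$ with $d\gamma=f'\,dx$, and then concludes by the triangle inequality and translation invariance of the norm, bounding $\|\gamma\|_1$ by routing $\gamma_+$ through a Dirac mass at the origin onto $\gamma_-$; this gives the factor $|t|$ in two lines and requires nothing about $f$ beyond integrability of $|f'(x)|\sqrt{|x|}$. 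You instead slice $\eta$ horizontally (layer-cake), couple each level set $\{f>s\}$ with its translate component by component via an explicit plan (fix the overlap, shift the excess by the component length), and convert the resulting bound $t\int_0^\infty\sum_i\sqrt{|I_i|}\,ds$ into $|t|\int|f'(x)|\sqrt{|x|}\,dx$ through the coarea formula, Sard's theorem (which also guarantees finitely many components at a.e.\ level) and subadditivity of the square root. What each buys: the paper's proof is shorter, purely metric, and works verbatim for any cost that is a concave function of distance; yours constructs an explicit coupling, shows geometrically where the mass goes, and exhibits near-sharpness (though your uniform-density example sits outside the smoothness hypothesis and should be presented as a limiting case). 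One remark worth making: the ``crude coupling routing all excess mass through the origin'' that you correctly say would overshoot is essentially what the paper does --- but applied to the derivative measure $\gamma$ \emph{after} decomposing the difference into infinitesimal translations, and it is precisely this differentiation step that removes the spurious lower-order term and recovers the clean factor $|t|$.
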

\begin{proof}
The left-hand side is the $1$-Wasserstein distance $\mathcal{W}^\R_1(\eta,(\trans^\R_t)_\#\eta)$ on $\R$ for the distance $(x,y)\mapsto \sqrt{|y-x|}$. This is also the Kantorovich norm $\|\eta-(\trans^\R_t)_\#\eta)\|_1$, in the sense of \cite{MR0102006}. Recall that the Kantorovich norm of a signed Radon measure $\sigma=\sigma_+-\sigma_-$ of mass zero is 
\[\|\sigma\|_1=\mathcal{W}_1^\R(\sigma_+,\sigma_-)=\inf_{\pi\in \Pi(\sigma_+,\sigma_-)}\iint \sqrt{|y-x|}d\pi(x,y)\]

where the mass of $\sigma_+$ can be different from 1. Let $\sigma=\eta-(\trans^\R_t)_\#\eta)$ and for the computation of $\|\sigma\|_1$ assume without loss of generality that $t\geq 0$. We have $\sigma=\int_0^t (\trans^\R_u)_\#\gamma\,d u$ where $\gamma$ is the Radon measure of density $f'$. Therefore 
\begin{align*}
\|\sigma\|_1=\left\|\int_0^t(\trans^\R_u)_\#\gamma\,du\right\|&\leq \int_0^t\|(\trans^\R_u)_\#\gamma\|_1\,du\\
&= \int_0^t\|\gamma\|_1\,du \,  = t \|\gamma\|_1 \\
&\leq \,  t\times\left(\int |f'(x)|\sqrt{|x|}\,dx\right).
\end{align*}
Note that for the last inequality, we use the triangle inequality by transporting $\gamma_+$, of density $f'_+$, to the atomic measure $(\int f'_+) \delta_0$ and then from this measure to $\gamma_-$, of density $f'_-$.  
\end{proof}
We can now conclude by coupling $Z$ and $Z'$, conditionally on $(X,Y)$. From \eqref{restart}:
\begin{align*}
\E\left[d_\He((X,Y,Z),(X,Y,Z+x'Y))|(X,Y)\right]&\leq 2\sqrt{\pi}|x'Y|\times\left(2\int |f'_{Z|X,Y}(z)|\sqrt{|z|}\,dz\right)\\
\end{align*}
where $f_{Z|x,y}$ is the density of $\law(Z|\,X=x,\ Y=y)$. Finally,
\begin{align*}
&\E\left[d_\He((X,Y,Z),(X,Y,Z+x'Y))\right]\\
&\leq |x'|\iint f_{X,Y}(x,y) 2\sqrt{\pi}|y|\times\left(\int |f_{Z|x,y}'(z)|\sqrt{|z|}\,dz\right)\,dx\,dy\\
&\leq |x'|\iiint 2\sqrt{\pi}|y|\sqrt{|z|}\left(\frac{|\partial_z f_{X,Y,Z}(x,y,z)|}{f_{X,Y,Z}(x,y,z)}\right)f_{X,Y,Z}(x,y,z)\,dx\,dy\,dz.
\end{align*}

Here $f_{X,Y,Z}$ is the density of $\mu=\law(X,Y,Z)$ with respect to the Lebesgue measure and $f_{X,Y}$ is the density of $(X,Y)$. Note that we have $f_{Z|x,y}(z)=f_{X,Y,Z}(x,y,z)/f_{X,Y}(x,y)$.

Since $f_{X,Y,Z}$ is the density of the heat kernel, it is well known (see \cite{Li} and \cite{BBBC} equation (14) ) that there exists a constant $C>0$ such that for all $(x,y,z)\in\He$,
\begin{equation}\label{eq:dzp1}
\left|  \frac{\partial_z f_{X,Y,Z}(x,y,z)}{f_{X,Y,Z}(x,y,z)} \right| \leq C.
\end{equation} 
The proof  of this fact is analytic and is based on the explicit representation of the heat kernel as  an oscillatory intergal.

The proof of Theorem \ref{thm:static} is then finished since the quantity $|y| \sqrt  {|z|}$
is clearly integrable with respect to the heat kernel $ f_{X,Y,Z}$.

\end{proof}

\section{Generalisation  to the Heisenberg groups of higher dimension.}\label{sec:gene}
In this section, we prove that Theorems \ref{thm:rapport-Winfty}, \ref{thm:Winfty},
 \ref{thm:rapport-W2}, \ref{thm:W2}, \ref{thm:refl} and \ref{thm:static} also hold in the case of the Heisenberg groups of higher dimension.
For $n\geq 1$ the Heisenberg group $\He_n$ can be identified with $\R^{2n+1}$ equipped with the law:
\[
  \left((x_i,y_i)_{i=1}^n ,z\right)\cdot  \left((x'_i,y'_i)_{i=1}^n ,z'\right)=   \left((x_i+x'_i,y_i+y_i')_{i=1}^n ,  z+z'+\frac{1}{2}\sum_{i=1}^n (x_iy_i'-y_ix_i')\right).
 \]
The corresponding Brownian motion starting from $0_{\R^{2n+1}}$ is given by:
\[
\B_t^0:= \left( \left(B_{t,i}^1, B_{t,i}^2\right)_{i=1,\dots,n},   \frac{1}{2} \sum_{i=1}^n\left( \int_0^t B^1_{s,i} dB^2_{s,i} -  \int_0^t B^2_{s,i} dB^1_{s,i}\right) \right)
\]
where  $B_t:=\left(B_{t,i}^1, B_{t,i}^2\right)_{i=1,\dots,n}$ is a $2n$-dimensional standard Brownian motion. As before, we denote by $(\B_t ^a)_{t\geq 0}$ the Brownian motion starting in $a\in \He_n$. It can be represented as  $a \cdot \B_t^0$ and we write $\mu_t^a$ for its law at time $t$.

Lemma \ref{lem:gen-coupling} can directly be generalised for describing co-adapted Heisenberg Brownian motions $(\B_t,\B_t')$ but with matrices $J, \hat J\in \mathcal M_{2n}(\R)$.
As above, we denote by $R_t$ the Euclidean norm of $B_t'-B_t\in \R^{2n}$ and by $Z_t $ the last coordinate of $\B_t'^{-1} \B_t$ that we still call the relative L\'evy area. The quantity 
\[
d_H(\B_t,\B'_t):= \sqrt{R_t^2 + |Z_t|}
\]
is still a homogenous distance on $\He_n$ and is equivalent to the Carnot-Carath\'eodory distance $d_\He$.

When $R_t>0$ we introduce the following basis:
let $e_1$ be $\frac{1}{R_t}(B_t' - B_t)\in \R^{2n}$, write $e_1=(a_1, \dots a_n), \, a_j\in \mathbb C=\R^2$,  set $e_2= (ia_1,\dots, i a_n)$ and complete $(e_1,e_2) $ into a direct orthonormal basis of $\R^{2n}$. 
This basis is well adapted for studying couplings in $\He_n$.
Indeed, with $L$ and $\hat L$ being the coupling matrices in this new basis in place of $J, \hat J$ in the canonical basis, a computation gives:

\begin{lemme} With the above notation, if $R_t>0$, then
\begin{align*}
\left\{
\begin{aligned}
d(R^2_{t})&=2  R_t \sqrt{2(1-L^{1,1}) } \, dC_t + 2  \, \tr \left(I_{2n}-   J\right)dt\\
dZ_{t} &= \frac{1}{2}\, R_t \sqrt{2(1+L^{2,2}) } \,  d\tilde C_t + \frac{1}{2} \sum_{i=1}^n ( J^{2i-1,2i} - J^{2i,2i-1} )dt\\
\end{aligned}
\right.
\end{align*}
where   $(C_t)_{t\geq 0}$ and  $(\tilde C_t)_{t\geq 0}$ are some 1-dimensional (possibly correlated) standard Brownian motions.  
\end{lemme}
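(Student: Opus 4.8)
The plan is to mirror the one-dimensional computation of Lemma~\ref{lem:SDE}, organising the $2n$ real coordinates of $\R^{2n}=\mathbb C^n$ into the $n$ symplectic pairs dictated by the group law, and exploiting the complex structure $i$ (multiplication by $\sqrt{-1}$ on $\mathbb C^n$, realised as a block-diagonal rotation $M$ of $\R^{2n}$) to single out the direction along which the relative L\'evy area grows. Writing $dB'_t=J_t\,dB_t+\hat J_t\,d\hat B_t$ via the generalisation of Lemma~\ref{lem:gen-coupling}, I would set $e_1=(B'_t-B_t)/R_t$, $e_2=Me_1=ie_1$, and $L=Q^TJQ$, $\hat L=Q^T\hat JQ$ in the moving frame $Q$ whose first two columns are $e_1,e_2$. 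Because $Q$ is orthogonal, the constraint $JJ^T+\hat J\hat J^T=I_{2n}$ transfers to $LL^T+\hat L\hat L^T=I_{2n}$, exactly as in Lemma~\ref{lem:JK}.

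For $R_t^2=|B_t-B'_t|^2$ I would apply It\^o's formula. The martingale part is the projection $2R_t\,(e_1\mid dB_t-dB'_t)$, whose quadratic variation equals $\left(|e_1|^2-2(e_1\mid Je_1)+(e_1\mid(JJ^T+\hat J\hat J^T)e_1)\right)dt=2(1-L^{1,1})\,dt$; it therefore collapses to $2R_t\sqrt{2(1-L^{1,1})}\,dC_t$ for a single one-dimensional Brownian motion $C$. The drift is the sum of the quadratic variations of the $2n$ coordinates of $B-B'$, each of which is $2(1-J^{k,k})\,dt$, so the total is $2\,\tr(I_{2n}-J)$; no information about the frame beyond $e_1$ enters here.

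The computation for $Z_t$ is where the complex structure does the work. Expanding the last coordinate of $(\B_t')^{-1}\B_t$ and applying It\^o, the martingale part is the symplectic contraction $R_t\,\omega\!\left(e_1,\tfrac{dB_t+dB'_t}{2}\right)$, where $\omega(u,v)=\sum_{i=1}^n(u_{2i-1}v_{2i}-u_{2i}v_{2i-1})=(Mu\mid v)$. Since $Me_1=e_2$ by construction, this equals $\tfrac{R_t}{2}(e_2\mid dB_t+dB'_t)$, whose quadratic variation is $2(1+L^{2,2})\,dt$ by the same constraint, giving $\tfrac{R_t}{2}\sqrt{2(1+L^{2,2})}\,d\tilde C_t$. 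The drift comes from the cross-variations $d\langle B_{2i-1},B'_{2i}\rangle=J^{2i,2i-1}\,dt$ and $d\langle B_{2i},B'_{2i-1}\rangle=J^{2i-1,2i}\,dt$, which sum to $\tfrac12\sum_{i=1}^n(J^{2i-1,2i}-J^{2i,2i-1})\,dt$. The correlation of $C$ and $\tilde C$ would follow, if needed, from the off-diagonal entries of $L$, but the statement does not require it.

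The hard part is conceptual rather than computational: one must recognise that the L\'evy-area increment is governed by the single direction $e_2=Me_1$, i.e.\ that the relevant pairing is the symplectic form $\omega=(M\,\cdot\mid\cdot)$ and not some arbitrary antisymmetric form, and then choose the frame so that its second vector is precisely $Me_1$. Once this is arranged, the stated coefficients depend only on $e_1$, $e_2$ and the canonical-basis quantities $\tr J$ and $\sum_i(J^{2i-1,2i}-J^{2i,2i-1})$, so the completion of $(e_1,e_2)$ into a full orthonormal basis is irrelevant to the result, and the only remaining task is the book-keeping of the $2n$ indices into $n$ pairs, which proceeds in complete parallel with the case $n=1$.
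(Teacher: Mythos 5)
Your proposal is correct and follows essentially the same route as the paper: the paper introduces exactly this adapted moving frame ($e_1$ along $B'_t-B_t$, $e_2=ie_1$, completed to an orthonormal basis, with $L=Q^TJQ$) and then states the result as ``a computation gives,'' that computation being precisely the generalisation of Lemma~\ref{lem:SDE} that you carry out. Your identification of the symplectic form $\omega(u,v)=(Mu\mid v)$ and of $e_2=Me_1$ as the direction driving the relative L\'evy area is exactly the idea behind the paper's choice of basis, and your coefficient computations (quadratic variations $2(1-L^{1,1})$, $2(1+L^{2,2})$, drift $2\,\tr(I_{2n}-J)$ and $\tfrac12\sum_i(J^{2i-1,2i}-J^{2i,2i-1})$, up to sign conventions absorbed into $C$ and $\tilde C$) all check out.
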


Now, since $\tr L= \tr J$, and since each $|L^{i,i}|  \leq 1$ for $1\leq i\leq n$,
 \[
 1-L^{2,2} \leq \tr(I-L)= \tr(I-J)
 \]
  and one can directly adapt the proof of  Theorems \ref{thm:Winfty} and  \ref{thm:W2} to this setting. Similarly as  before, one can deduce that   Theorems \ref{thm:rapport-Winfty} and  \ref{thm:rapport-W2} are also satisfied for higher dimensional Heisenberg groups.

\

\emph{Generalisation of Theorem \ref{thm:refl}:} The coupling by reflection can also be done on $\He_n$. It corresponds to the matrix $L$ defined by 
\[\left\{
\begin{array}{l}
L^{1,1}= -1\\
L^{i,i}=1 \textrm{ for }{2 \leq  i \leq n}\\ 
L^{i,j}=0  \textrm{ for } {i\neq j.}\\
\end{array}
\right.
\]

In this case, a computation easily gives that  $C_t$ and $\tilde C_t$ are independent.  Moreover since $L$ is symmetric, $J$ is also symmetric and $\sum_{i=1}^n ( J^{2i-1,2i} - J^{2i,2i-1})=0$. As a consequence, $R_t^2$ and $Z_t$ satisfy the same stochastic differential system as in the case of $\He_1$. Thus Proposition \ref{pro:assymp} holds in $\He_n$ with constants $C_p,C_p'$ and $C_p''$ independent of the dimension and Theorem \ref{thm:refl} also holds for $\He_n$.

\

\emph{Generalisation of Theorem \ref{thm:static}:} We turn to the static coupling. As before, using the symmetries  and the dilatation of the higher dimensional Heisenberg groups, it suffices to study the case $t=1$ and $a= 0$, $a'= ((x',0), (0,0) , \dots, (0,0), 0)$.
If  the vector $V=( (X_1, Y_1), \dots (X_d,Y_d), Z)$ has law $\mu_1^0$, then the vector $a'.V=(X_1+ x', Y_1), (X_2,Y_2), \dots (X_d,Y_d), Z+ \frac{1}{2} x' Y_2 )$ has law $\mu_1^{a'}$. An analogue coupling as the one in Section \ref{sec:bonus} can be defined. The horizontal part is translated by $((x,'0), \dots,(0,0))$ and conditionally on $( (X_1, Y_1), \dots (X_d,Y_d))$, we perform a coupling between the law of $Z$ and 
 the law of $Z+ \frac{1}{2} x' Y_2 $ as described in Lemma \ref{lem:sqrt}. Recall that it is adapted to the non-convex transport cost $(z,z')\mapsto \sqrt{|z-z'|}$. 

Since the heat kernel  estimate corresponding to \eqref{eq:dzp1} also holds in higher dimension (see \cite{MR2541147}), the proof finishes analogously to the one in $\He_1$. Therefore Theorem \ref{thm:static} is satisfied for higher dimensional Heisenberg groups too.

\bibliographystyle{abbrv}
\bibliography{basebib_bon_jui}
\end{document}